\newtheorem{prop}{Proposition}[section]
\newtheorem{thm}[prop]{Theorem}
\newtheorem{lemma}[prop]{Lemma}
\newcommand{\Sym}{\mathrm{S}}
\begin{document}
\title{The degrees of the orientation-preserving automorphism groups of toroidal maps and hypermaps}

\author[M. E. Fernandes]{Maria Elisa Fernandes}
\address{Maria Elisa Fernandes,
Center for Research and Development in Mathematics and Applications, Department of Mathematics, University of Aveiro, Portugal
}
\email{maria.elisa@ua.pt}

\author[Claudio Piedade]{Claudio Alexandre Piedade}
\address{Claudio Alexandre Piedade, Centro de Matemática da Universidade do Porto, Universidade do Porto, Portugal
}
\email{claudio.piedade@fc.up.pt}

\date{}
\maketitle

\begin{abstract}
This paper is an exploration  of the faithful transitive permutation representations of the orientation-preserving automorphisms groups of highly symmetric toroidal maps and hypermaps.
The main theorems of this paper give a list of all possible degrees of these specific groups.  This extends prior accomplishments of the authors, wherein their focus was confined to the study of the automorphisms groups of toroidal regular maps and hypermaps. 

In addition the authors bring out the recently developed  {\sc GAP} package {\sc corefreesub} that can be used to find faithful transitive permutation representations of any group. 
With the aid of this powerful tool, the authors  show how Schreier coset graphs of the automorphism groups of toroidal maps and hypermaps can be easily constructed.
\end{abstract}

\noindent \textbf{Keywords:} Chiral Toroidal Maps, Chiral Toroidal Hypermaps, Chiral Polyhedra, Permutation Groups, Schreier Coset Graphs.

\noindent \textbf{2010 Math Subj. Class:} 05E18,  20B25, 52B11

\section{Introduction}

A faithful permutation representation of a group refers to a specific type of group action where each group element is represented by a unique permutation of a set $X$, and no two distinct group elements result in the same permutation. In other words, this representation captures all the distinct elements and their actions within the group. The degree of this representation is the size of the set $X$. A faithful permutation representation is valuable because it allows us to understand the structure and behavior of a group by studying how its elements permute the elements of a set. It's called ``faithful'' because it faithfully captures the group's structure without collapsing distinct group elements into the same permutation. This representation helps mathematicians and researchers analyze and classify groups, understand their properties, and explore their relationships with other mathematical objects. The study of minimal faithful degrees and permutation representations is an active area of research in group theory. For some groups, the minimal degree is relatively easy to compute, while for others, it remains an open question or requires sophisticated mathematical techniques. 

The automorphism groups of regular polytopes are  string C-groups, smooth quotients of Coxeter groups with linear diagrams. In particular, these groups are generated by an ordered set of involutions and nonconsecutive involutions of this set commute. Faithful transitive permutation representations of string C-groups are represented by undirected Schreier coset graphs, satisfying some additional properties due the commuting property of the generators \cite{pellicer}. 
These graphs have been an important tool to discover examples of abstract regular polytopes and to accomplish comprehensive classifications of such geometric objects \cite{fl, flm, flm2,Corr,2017CFLM, 2019fl}. 
This inspired the authors to investigate the different ways of representing a group by a graph (corresponding to a faithful transitive permutation representation).
Their research initiated  by the study of the  automorphism groups of toroidal regular maps \cite{FP20Tor, FP21Cor}. Subsequently, they delved into regular hypermaps, followed by locally toroidal regular polytopes \cite{FP21Hyper, FP22Loc}. In all these works, the authors exclusively focused their investigations on regular structures. Now they will expand their focus considering  toroidal chiral maps and hypermaps. The second author with Delgado also constructed a package for {\sc GAP} \cite{GAP}, named ``corefreesub'', to compute faithful transitive permutation representations of groups and their degrees, which is now available online \cite{CFS}.

The automorphism groups of toroidal chiral maps and hypermaps are $2$-generated groups. The two generators are rotations of the map, typically a face-rotation and a vertex-rotation.
The orientation-preserving automorphims groups of toroidal regular maps, which are index two subgroups of the automorphism group of these maps will also be included in our classification.
Similarly to what was done in our previous works we list all possible degrees of the orientation-preserving groups of automorphisms of toroidal maps. 

The correspondence between faithful transitive permutation representations and core-free subgroups, which is significant concept in group theory that relates group actions to subgroup structure, will be central in this work. 
For any faithful transitive permutation representation of a group, the stabilizer subgroup of the corresponding action is core-free. Conversely, for every core-free subgroup $H$ of a group $G$, there exists a faithful and transitive action of $G$ on the set of cosets of $H$. Thus in our classification we give all core-free indexes of the orientation-preserving automorphism groups, also known as rotational group, of toroidal maps and hypermaps.

\section{Toroidal maps and hypermaps}

In this section, we provide a concise overview of toroidal maps and hypermaps, a topic that has been extensively explored by numerous authors\cite{CoxeterGenerators,burnside,CoxeterSelf,CornHypermaps}.

A common approach to creating a toroidal map is to use a rectangular grid that wraps around the torus.
For that reason toroidal maps, which are embeddings of maps on the surface of a torus, are in correspondence with tesselations of the plane.
There are three types of toroidal maps corresponding to the only three regular plane tessellations, whose basic building blocks are one of the following three regular polygons: the square, the triangle or the hexagon.
Let $(0,1)$ and $(1,0)$ be unitary translations of the plane tesselation. Now consider a vector  $(s_1,s_2)$ for some  non-negative integers $s_1$ and $s_2$.
The  toroidal map that is obtained identifying opposite sides of a parallelogram with vertices 
$$(0,0),\,(s_1,s_2), \,(s_1-s_2,s_1+s_2)\mbox{ and }(-s_2,s_1)$$
 for a quadrangular tesselation and 
 $$(0,0),\,(s_1,s_2), \,(-s_2,s_1+s_2) \mbox{ and }(s_1-s_2,s_1+2s_2)$$
 for the a triangular or a hexagonal tesselation.
 The resulting maps are denoted by $\{4,4\}_{(s_1,s_2)}$ if the tiles of the plane tesselation are squares,  $\{3,6\}_{(s_1,s_2)}$ or $\{6,3\}_{(s_1,s_2)}$ when the tiles are, respectively, triangles or hexagons (see the examples of Figure~\ref{wrap31}).
 Similarly a toroidal hypermap, an embedding of a hypergraph on the torus, is obtained from a regular hexagonal tesselation having vertices with two colors (see Figure~\ref{333}). 
 The toroidal hypermap associated with a vector $(s_1,s_2)$ is  denoted by $(3,3,3)_{(s_1,s_2)}$.

   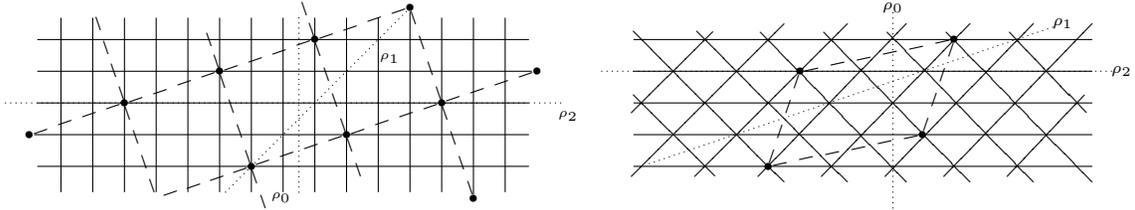
\begin{figure}[h!]
  \begin{tiny}
  \begin{tabular}{cc}
\hspace{-80pt} $\xymatrix@-3pc{
&&&&&&&&&&&&&&&&& &&&&&&&& &&&&&&&&&\\
&&&&\ar@{-}[dddddddddddd]&&*{\bullet}\ar@{-}[dddddddddddd]&&\ar@{-}[dddddddddddd]&&\ar@{-}[dddddddddddd]&&\ar@{-}[dddddddddddd]&&\ar@{-}[dddddddddddd]&&\ar@{-}[dddddddddddd]&\ar@{.}[dddddddddddd]^(0.001){\rho_0}&\ar@{-}[dddddddddddd]&&\ar@{-}[dddddddddddd]&&\ar@{-}[dddddddddddd]&&\ar@{-}[dddddddddddd]&&\ar@{-}[dddddddddddd]&&\ar@{-}[dddddddddddd]&&\ar@{-}[dddddddddddd]&&\ar@{-}[dddddddddddd]&&\\
&&&&&&&&&&&&&&&&& &&&&&&&& &&&&&&&&&\\
&&\ar@{-}[rrrrrrrrrrrrrrrrrrrrrrrrrrrrrrrr]&&&&&&&&&&&&&&& &&&*{\bullet}\ar@{--}[uuul]\ar@{--}[uurrrrrr]&&&&& &&&&&&&&&\\
&&&&&&&&&&&&&&&&& &&&&&&&& &&&&&&&&&\\
&&\ar@{-}[rrrrrrrrrrrrrrrrrrrrrrrrrrrrrrrr]&&&&&&&&&&&&*{\bullet}\ar@{--}[rrrrrruu]\ar@{--}[uuul]&&& &&&&&&&& &&&&&&&&&*{\bullet}\\
&&&&&&&&&&&&&&&&& &&&&&&&& &&&&&&&&&\\
\ar@{.}[rrrrrrrrrrrrrrrrrrrrrrrrrrrrrrrrrrrr]^(0.001){\rho_2}&&\ar@{-}[rrrrrrrrrrrrrrrrrrrrrrrrrrrrrrrr]&&&&&&*{\bullet}\ar@{--}[rrrrrruu]\ar@{--}[uuuuuull]\ar@{--}[ddddddrr]\ar@{--}[lllllldd]&&&&&&&&& &&&&&&&& &&&*{\bullet}\ar@{--}[rrrrrruu]\ar@{--}[uuuuuull]\ar@{--}[ddddddrr]&&&&&&&&&&&&&&\\
&&&&&&&&&&&&&&&&& &&&&&&&& &&&&&&&&&\\
&&*{\bullet}\ar@{-}[rrrrrrrrrrrrrrrrrrrrrrrrrrrrrrrr]&&&&&&&&&&&&&&& &&&&&*{\bullet}\ar@{--}[lluuuuuu]\ar@{--}[rrrrrruu]\ar@{--}[dddr]&&&&&&&&&&&&&&&\\
&&&&&&&&&&&&&&&&& &&&&&&&& &&&&&&&&&\\
&&\ar@{-}[rrrrrrrrrrrrrrrrrrrrrrrrrrrrrrrr]&&&&&&&&&&&&&&*{\bullet}\ar@{--}[uuuuuull]\ar@{--}[rrrrrruu]\ar@{--}[lllllldd]\ar@{--}[rddd]& &&&&&&&& &&&&&&&&&\\
&&&&&&&&&&&&&&&&& &&&&&&&& &&&&&&&&&\\
&&&&&&&&&&*{\bullet}\ar@{.}[urururururururururururur]^(0.2){\rho_1}&&&&&&&&&&&&&&&&&&&&&&&&\\
&&&&&&&&&&&&&&&&&&&&&&&&&&&&&&&&&&&&&&&&&
}$&
\hspace{-10pt}$\xymatrix@-3pc{
&&&&&&&&&&&&&&&&& &&&&&&&& &&&&&&&&&\\ 
&&&&&&&&&&&&&&&&&&&&&&&&& &&&&&&&&&\\
&& *{} \ar@{-}[ddddddddddrrrrrrrrrr] &&&&*{} \ar@{-}[ddddddddddrrrrrrrrrr]&&&&*{} \ar@{-}[ddddddddddrrrrrrrrrr]&&&&*{} \ar@{-}[ddddddddddrrrrrrrrrr]&&&&*{} \ar@{-}[ddddddddddrrrrrrrrrr]&&&&*{} \ar@{-}[ddddddddddrrrrrrrrrr]&&& &\ar@{-}[ddddddrrrrrr]&&&&&& &&\\
&& *{} \ar@{-}[rrrrrrrrrrrrrrrrrrrrrrrrrrrrrr] &&&&&&&&&&&&&&&&&&&&&*{\bullet}\ar@{--}[ddllllllllll]\ar@{--}[ddddddll]&& &&&&&&&&&\\
&& &&&&&&&&&&&&&&&&&&&&&&& &&&&&&&&&\\
&&*{} \ar@{-}[rrrrrrrrrrrrrrrrrrrrrrrrrrrrrr] &&&&&&&&&&&*{\bullet}&&&&&&&&&&&& &&&&&&&&&\\
&&\ar@{-}[ddddddrrrrrr] &&&&&&&&&&&&&&&&&&&&&&& &&&&&&&&&\\
 && *{} \ar@{-}[rrrrrrrrrrrrrrrrrrrrrrrrrrrrrr]&&&&&&&&&&&&&&&&&&&&&&& &&&&&&&&&\\
&&  \ar@{-}[uuuuuurrrrrr]&&&&&&&&&&&&&&&&&&&&&&& &&&&&&&&&\\
*{\rho_2} \ar@{.}[rrrrrrrrrrrrrrrrrrrrrrrrrrrrrrrrrr] && *{} \ar@{-}[rrrrrrrrrrrrrrrrrrrrrrrrrrrrrr] &&& &&&&&&&&&&&&&&&&*{\bullet}&&&& &&&&&&&&&\\
 &&   &&&&&&&&&&&&&&&&&&&&&&& &&&&&&&&&\\
  &&   *{} \ar@{-}[rrrrrrrrrrrrrrrrrrrrrrrrrrrrrr] &&&&&&&&&*{\bullet}\ar@{--}[uuuuuurr]\ar@{--}[uurrrrrrrrrr]&&&&&&&&&&&&&& &&&&&&&&&\\
 &&*{} \ar@{-}[uuuuuuuuuurrrrrrrrrr] &&*{\rho_1} \ar@{.}[uuuuuuuuurrrrrrrrrrrrrrrrrrrrrrrrrrr]&&*{} \ar@{-}[uuuuuuuuuurrrrrrrrrr]&&&&*{} \ar@{-}[uuuuuuuuuurrrrrrrrrr]&&&&*{} \ar@{-}[uuuuuuuuuurrrrrrrrrr]&&&&*{} \ar@{-}[uuuuuuuuuurrrrrrrrrr]&&&&*{} \ar@{-}[uuuuuuuuuurrrrrrrrrr]&& &&\ar@{-}[uuuuuurrrrrr]&&&&&&&&\\
&&   &&&&&&&&&&&&&*{\rho_0} \ar@{.}[uuuuuuuuuuuuu]&& &&&&&&&& &&&&&&&&&\\
 }$
 \end{tabular}
\end{tiny}
\caption{The maps $\{4,4\}_{(3,1)}$ and $\{3,6\}_{(2,1)}$.}
\label{wrap31}
\end{figure}

\begin{figure}[h!]
  \begin{tiny}
 $$\xymatrix@-3pc{
 &&*{\bullet}\ar@{-}[drr]\ar@{-}[dll]&&&&*{\bullet}\ar@{-}[drr]\ar@{-}[dll]&&&&*{\bullet}\ar@{-}[drr]\ar@{-}[dll]&&&&*{\bullet}\ar@{-}[drr]\ar@{-}[dll]&&*{}\ar@{.}[dddddddddddddddddddddd]^(.001){\rho_0}&&*{\bullet}\ar@{-}[drr]\ar@{-}[dll]&&&&*{\bullet}\ar@{-}[drr]\ar@{-}[dll]&&&&*{\bullet}\ar@{-}[drr]\ar@{-}[dll]&&&&*{\bullet}\ar@{-}[drr]\ar@{-}[dll]&&\\
 *{\circ}\ar@{-}[dd]&&&&*{\circ}\ar@{-}[dd]&&&&*{\circ}\ar@{-}[dd]&&&&*{\circ}\ar@{-}[dd]&&&&*{\circ}\ar@{-}[dd]&&&&*{\circ}\ar@{-}[dd]&&&&*{\circ}\ar@{-}[dd]&&&&*{\circ}\ar@{-}[dd]&&&&*{\circ}\ar@{-}[dd]\\
 \\
 *{\bullet}\ar@{-}[drr]&&&&*{\bullet}\ar@{-}[drr]\ar@{-}[dll]&&&&*{\bullet}\ar@{-}[drr]\ar@{-}[dll]&&&&*{\bullet}\ar@{-}[drr]\ar@{-}[dll]&&&&*{\bullet}\ar@{-}[drr]\ar@{-}[dll]&&&&*{\bullet}\ar@{-}[drr]\ar@{-}[dll]&&&&*{\bullet}\ar@{-}[drr]\ar@{-}[dll]&&&&*{\bullet}\ar@{-}[drr]\ar@{-}[dll]&&&&*{\bullet}\ar@{-}[dll]\ar@{.}[ddddddddddddddddllllllllllllllllllllllllllllllll]^(.1){\rho_1}\\
  &&*{\circ}\ar@{-}[dd]&&&&*{\circ}\ar@{-}[dd]&&&&*{\circ}\ar@{-}[dd]&&&&*{\circ}\ar@{-}[dd]&&&&*{\circ}\ar@{-}[dd]&&&&*{\circ}\ar@{-}[dd]&&&&*{\circ}\ar@{-}[dd]&&&&*{\circ}\ar@{-}[dd]\\
 \ar@{.}[rrrrrrrrrrrrrrrrrrrrrrrrrrrrrrrrdddddddddddddddd]^(.01){\rho_2}\\
 &&*{\bullet}\ar@{-}[drr]\ar@{-}[dll]&&&&*{\bullet}\ar@{-}[drr]\ar@{-}[dll]&&&&*{\bullet}\ar@{-}[drr]\ar@{-}[dll]&&&&*{\bullet}\ar@{-}[drr]\ar@{-}[dll]&&&&*{\bullet}\ar@{-}[drr]\ar@{-}[dll]&&&&*{\bullet}\ar@{-}[drr]\ar@{-}[dll]&&&&*{\bullet}\ar@{-}[drr]\ar@{-}[dll]&&&&*{\bullet}\ar@{-}[drr]\ar@{-}[dll]&&\\
 *{\circ}\ar@{-}[dd]&&&&*{\circ}\ar@{-}[dd]&&&&*{\circ}\ar@{-}[dd]&&&&*{\circ}\ar@{-}[dd]&&&&*{\circ}\ar@{-}[dd]&&&&*{\circ}\ar@{-}[dd]&&&&*{\circ}\ar@{-}[dd]&&&&*{\circ}\ar@{-}[dd]&&&&*{\circ}\ar@{-}[dd]\\
 \\
 *{\bullet}\ar@{-}[drr]&&&&*{\bullet}\ar@{-}[drr]\ar@{-}[dll]&&&&*{\bullet}\ar@{-}[drr]\ar@{-}[dll]\ar@{--}[rrrrrrrrrrrrrruuu]&&&&*{\bullet}\ar@{-}[drr]\ar@{-}[dll]&&&&*{\bullet}\ar@{-}[drr]\ar@{-}[dll]&&&&*{\bullet}\ar@{-}[drr]\ar@{-}[dll]&&&&*{\bullet}\ar@{-}[drr]\ar@{-}[dll]&&&&*{\bullet}\ar@{-}[drr]\ar@{-}[dll]&&&&*{\bullet}\ar@{-}[dll]\\
  &&*{\circ}\ar@{-}[dd]&&&&*{\circ}\ar@{-}[dd]&&&&*{\circ}\ar@{-}[dd]&&&&*{\circ}\ar@{-}[dd]&&&&*{\circ}\ar@{-}[dd]&&&&*{\circ}\ar@{-}[dd]&&&&*{\circ}\ar@{-}[dd]&&&&*{\circ}\ar@{-}[dd]\\
 \\
  &&*{\bullet}\ar@{-}[drr]\ar@{-}[dll]&&&&*{\bullet}\ar@{-}[drr]\ar@{-}[dll]&&&&*{\bullet}\ar@{-}[drr]\ar@{-}[dll]&&&&*{\bullet}\ar@{-}[drr]\ar@{-}[dll]&&&&*{\bullet}\ar@{-}[drr]\ar@{-}[dll]&&&&*{\bullet}\ar@{-}[drr]\ar@{-}[dll]&&&&*{\bullet}\ar@{-}[drr]\ar@{-}[dll]&&&&*{\bullet}\ar@{-}[drr]\ar@{-}[dll]&&\\
 *{\circ}\ar@{-}[dd]&&&&*{\circ}\ar@{-}[dd]&&&&*{\circ}\ar@{-}[dd]&&&&*{\circ}\ar@{-}[dd]&&&&*{\circ}\ar@{-}[dd]&&&&*{\circ}\ar@{-}[dd]&&&&*{\circ}\ar@{-}[dd]&&&&*{\circ}\ar@{-}[dd]&&&&*{\circ}\ar@{-}[dd]\\
 \\
 *{\bullet}\ar@{-}[drr]&&&&*{\bullet}\ar@{-}[drr]\ar@{-}[dll]&&&&*{\bullet}\ar@{-}[drr]\ar@{-}[dll]&&&&*{\bullet}\ar@{-}[drr]\ar@{-}[dll]&&&&*{\bullet}\ar@{-}[drr]\ar@{-}[dll]&&&&*{\bullet}\ar@{-}[drr]\ar@{-}[dll]\ar@{--}[rruuuuuuuuu]&&&&*{\bullet}\ar@{-}[drr]\ar@{-}[dll]&&&&*{\bullet}\ar@{-}[drr]\ar@{-}[dll]&&&&*{\bullet}\ar@{-}[dll]\\
  &&*{\circ}\ar@{-}[dd]&&&&*{\circ}\ar@{-}[dd]&&&&*{\circ}\ar@{-}[dd]&&&&*{\circ}\ar@{-}[dd]&&&&*{\circ}\ar@{-}[dd]&&&&*{\circ}\ar@{-}[dd]&&&&*{\circ}\ar@{-}[dd]&&&&*{\circ}\ar@{-}[dd]\\
 \\
  &&*{\bullet}\ar@{-}[drr]\ar@{-}[dll]&&&&*{\bullet}\ar@{-}[drr]\ar@{-}[dll]\ar@{--}[rrrrrrrrrrrrrruuu]\ar@{--}[rruuuuuuuuu]&&&&*{\bullet}\ar@{-}[drr]\ar@{-}[dll]&&&&*{\bullet}\ar@{-}[drr]\ar@{-}[dll]&&&&*{\bullet}\ar@{-}[drr]\ar@{-}[dll]&&&&*{\bullet}\ar@{-}[drr]\ar@{-}[dll]&&&&*{\bullet}\ar@{-}[drr]\ar@{-}[dll]&&&&*{\bullet}\ar@{-}[drr]\ar@{-}[dll]&&\\
 *{\circ}\ar@{-}[dd]&&&&*{\circ}\ar@{-}[dd]&&&&*{\circ}\ar@{-}[dd]&&&&*{\circ}\ar@{-}[dd]&&&&*{\circ}\ar@{-}[dd]&&&&*{\circ}\ar@{-}[dd]&&&&*{\circ}\ar@{-}[dd]&&&&*{\circ}\ar@{-}[dd]&&&&*{\circ}\ar@{-}[dd]\\
 \\
 *{\bullet}\ar@{-}[drr]&&&&*{\bullet}\ar@{-}[drr]\ar@{-}[dll]&&&&*{\bullet}\ar@{-}[drr]\ar@{-}[dll]&&&&*{\bullet}\ar@{-}[drr]\ar@{-}[dll]&&&&*{\bullet}\ar@{-}[drr]\ar@{-}[dll]&&&&*{\bullet}\ar@{-}[drr]\ar@{-}[dll]&&&&*{\bullet}\ar@{-}[drr]\ar@{-}[dll]&&&&*{\bullet}\ar@{-}[drr]\ar@{-}[dll]&&&&*{\bullet}\ar@{-}[dll]\\
  &&*{\circ}&&&&*{\circ}&&&&*{\circ}&&&&*{\circ}&&&&*{\circ}&&&&*{\circ}&&&&*{\circ}&&&&*{\circ}&&&&*{}\\
}$$
\end{tiny}
 \caption{The hypermap $(3,3,3)_{(3,1)}$}\label{333}
\end{figure}
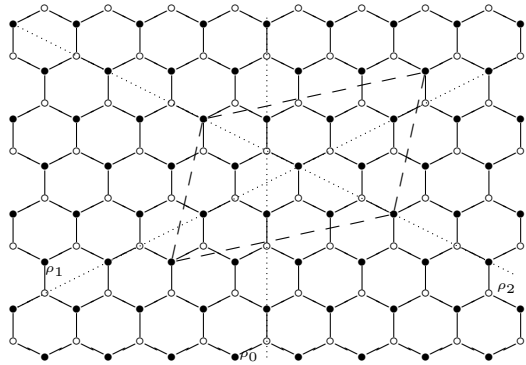

If a toroidal map or hypermap is identical to its mirror image it is \emph{regular}, otherwise it is \emph{chiral}. A toroidal map or hypermap associated with the vector $(s_1,s_2)$ is regular if and only if $s_1s_2(s_1-s_2) = 0$.
 When $(s_1,s_2)\in\{(1,0), (0,1)\}$ we get a degenerated regular tesselation of the torus with either one or two faces. Moreover, with $(s_1,s_2) = (1,1)$ the action on the set of edges is never faithful. Let us consider the cases where the faces of the tesselations of the torus have the shape has the ones of the correspondent planar tesselation. In what follows we assume that $(s_1,s_2)\notin\{(1,0), (0,1), (1,1)\}$.

Plane tesselations are infinite regular polyhedra whose automorphism group is one of the  Coxeter groups $[4,4]$, $[3,6]$ or $[6,3]$ \cite{CoxeterGenerators}.  
The automorphim group of a regular hexagonal tesselation, having vertices with two colors, is also a Coxeter group having a triangular Coxeter diagram.
The groups of automorphims of toroidal maps  and hypermaps are factorizations of these infinite Coxeter groups.

A flag in a map (or hypermap) is a triple of mutual incident  elements (vertex, edge, face) (or (hypervertex, hyperedge, hyperface)).
Flags are \emph{adjacent} if they have exactly  two elements in common.
Consider a flag $(x,y,z)$ (base flag) and their three adjacent flags $(x',y,z)$, $(x,y', z)$ and  $(x,y, z')$. 
Now let $\rho_0$ be the reflexion of the plane tesselation sending $(x,y,z)$ to $(x',y,z)$, $\rho_1$ the reflexion sending $(x,y,z)$ to $(x,y',z)$ and finally,  $\rho_2$ the reflection sending $(x,y,z)$ to $(x,y,z')$.
The group of automorphisms of the plane tesselation is generated by these three involutions. 
Consider the automorphisms $a:=\rho_0\rho_1$, $b:=\rho_1\rho_2$ and $ab:=\rho_0\rho_2$, which are 
rotations, $a$ is a counter-clockwise rotation around a face (or hyperface); $b$ is a counter-clockwise rotation around a vertex (or hypervertex) and $ab$ is a clockwise rotation around an edge (or hyperedge).
 The orders of $a$, $b$ and $ab$ determines the Cotexer group. In the case of the maps the order of $ab$ is $2$ and the orders of $a$ and $b$ are in correspondence with the two parameters of the Coxeter group.
 For the bipartite hexagonal tesselation of the plane the order of $a$, $b$ and $ab$ is $3$.
The rotations $a$ and $b$ are generators of the orientation-preserving automorphism group of the tesselation, commonly called the \emph{rotational group}.
Among these orientation-preserving automorphisms we find unitary translations sending a tile to an adjacent tile.
Let $u$ and $v$ be  unitary translations sending the origin $(0,0)$ to $(1,0)$ and $(0,1)$ respectively.

The rotational group $G$ of a toroidal map or hypermap  is a factorization of the rotational group of the corresponding tesselation by the relation $u^{s_1}v^{s_2}=id$ (where $id$ denotes the identity of $G$).
We consider the translations $u$ and $v$ as defined in Table~\ref{translations}.
As the map $\{6,3\}_{(s_1,s_2)}$ is the dual of $\{3,6\}_{(s_1,s_2)}$, we get the rotational group of  $\{6,3\}_{(s_1,s_2)}$ interchanging the rotations $a$ and $b$. 
Having this in mind the results for the map $\{6,3\}_{(s_1,s_2)}$ can be obtained from the corresponding results for the map $\{3,6\}_{(s_1,s_2)}$.

\begin{table}
\begin{center}
\begin{tabular}{c|c|c}
Map& Translations & $|T:=\langle u,v\rangle|$  \\[5pt]
\hline
$\{4,4\}_{(s_1,s_2)}$ & 
\begin{tabular}{lll}
$u= ab^{-1}$& $u^a=v^{-1}$&  $u^{b}=v^{-1}$ \\
 $v= a^{-1}b$& $v^a=u$&$v^{b}=u$\\
\end{tabular}
&
$s_1^2+s_2^2$ \\
\hline
$\{3,6\}_{(s_1,s_2)}$ 
&
\begin{tabular}{lll}
 $u= ab^{-2}$&$u^{a}=v^{-1}$&$u^{b}=t^{-1}$ \\
$v= a^{-1}b^2$& $v^{a}=t^{-1}$&$ v^{b}=u$\\
$t:=u^{-1}v$&  $t^a = u$& $t^b=v$
\end{tabular}
&
$s_1^2+s_1s_2+s_2^2$\\[5pt]
\hline
$(3,3,3)_{(s_1,s_2)}$ & 
\begin{tabular}{lll}
 $u= ab^{-1}$& $ u^a  = v^{-1}$ & $u^b = v^{-1}$\\
$v= a^{-1}b$ &$v^a= t^{-1}$&$v^b = t^{-1}$\\
$t:=u^{-1}v$& $t^a = u$ & $ t^b = u$  \\
\end{tabular}
&
$s_1^2+s_1s_2+s_2^2$

\end{tabular}
\end{center}
\label{translations}
\end{table}

The subgroup $T$ of $G$ generated by $u$ and $v$ is abelian and is a normal subgroup of $G$. 
Moreover, $T$  acts regularly on the set $V$ of vertices of the toroidal map, hence $|V|=|T|$.
In addition, $G$ acts on the flags with two orbits, hence $|G|=m|V|$ where $m$ is the order of $a$. The translations $u$ and $v$  are conjugate and have order $\frac{|V|}{gcd(s_1,s_2)}$. 

When the map is regular, there exists an automorphism of $G$ sending $a$ to $a^{-1}$ and  $b$ to $b^{-1}$. In this case the group of automorphims of the map is twice bigger then its rotational group.  
In the chiral case the rotational group is precisely the group of automorphisms of the map. 

In what follows $G=\langle a, b\rangle$ is the rotational group of a toroidal map or hypermap and $T=\langle u, v\rangle$ is the group of translations defined in this section.
We now assume that $G$ has a faithful transitive permutation representation of degree $n$. We will determine, for each toroidal map and hypermap, the possible values for the degree $n$ of $G$.
Before we proceed we give some general results that work the same way for any toroidal embedding.

\section{Preliminary Results}

One consequence of the definition of the translation group $T$ is the following.

\begin{prop}\label{Tsubgroup}
 Any element of the translation subgroup $T$ is of the form $u^i v^j$ with $i\in\{1,\ldots, |u|\}$ and $j\in\{1,\ldots, gcd(s_1,s_2)\}$.
\end{prop}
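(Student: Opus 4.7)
The plan is to exploit three ingredients established earlier in Section~2: that $T$ is abelian, that it is the quotient of the rotational translation group by the relation $u^{s_1}v^{s_2}=\mathrm{id}$, and that $T$ is normal in $G$, so conjugation by $a$ keeps us inside $T$ and sends the defining relation to another relation among $u$ and $v$. Since $T$ is abelian and generated by $u$ and $v$, every element already has the form $u^i v^j$ for some integers $i,j$, and the exponent $i$ may be reduced modulo $|u|$ to lie in $\{1,\ldots,|u|\}$. The substantive content of the proposition is therefore that $j$ may be restricted to $\{1,\ldots,d\}$ with $d:=\gcd(s_1,s_2)$, which is equivalent to showing $v^{d}\in\langle u\rangle$.

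To produce $v^d\in\langle u\rangle$ I would first read off $v^{s_2}=u^{-s_1}$ from the defining relation, and then look for a companion relation giving $v^{s_1}\in\langle u\rangle$ by conjugating $u^{s_1}v^{s_2}=\mathrm{id}$ by $a$, using the formulas in Table~\ref{translations}. For $\{4,4\}_{(s_1,s_2)}$, where $u^a=v^{-1}$ and $v^a=u$, conjugation yields $v^{-s_1}u^{s_2}=\mathrm{id}$, i.e.\ $v^{s_1}=u^{s_2}$. For $\{3,6\}_{(s_1,s_2)}$ and $(3,3,3)_{(s_1,s_2)}$, where $u^a=v^{-1}$ and $v^a=t^{-1}=v^{-1}u$ (since $t=u^{-1}v$ and $T$ is abelian), the conjugated relation becomes $v^{-s_1}(v^{-1}u)^{s_2}=\mathrm{id}$, which collapses to $v^{s_1+s_2}=u^{s_2}$; combined with $v^{s_2}=u^{-s_1}$ this gives $v^{s_1}=u^{s_1+s_2}$. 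In all three cases $v^{s_1}$ and $v^{s_2}$ both lie in $\langle u\rangle$, so B\'ezout's identity $xs_1+ys_2=d$ yields $v^d=(v^{s_1})^x(v^{s_2})^y\in\langle u\rangle$.

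Once $v^d\in\langle u\rangle$, any $v^j$ can be rewritten, by dividing $j$ by $d$ with remainder in $\{1,\ldots,d\}$, as a power of $u$ times $v^{j'}$ with $j'\in\{1,\ldots,d\}$; absorbing the $u$-contribution into $i$ and reducing modulo $|u|$ produces the advertised normal form. The cardinality $|u|\cdot d=|T|$, which follows from the formula $|u|=|V|/\gcd(s_1,s_2)=|T|/d$ recorded just before the proposition, confirms that each element of $T$ is expressed exactly once. The main obstacle will be the bookkeeping in the triangular and hypermap cases, where $v^a=t^{-1}$ rather than simply a power of $u$ or $v$: one has to expand $t=u^{-1}v$ and invoke the commutativity of $T$ before the $a$-conjugate of the defining relation becomes a usable identity between $u$ and $v$ alone.
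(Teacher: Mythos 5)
Your proof is correct and follows essentially the same route as the paper's: reduce to showing $v^{\gcd(s_1,s_2)}\in\langle u\rangle$, conjugate the defining relation $u^{s_1}v^{s_2}=\mathrm{id}$ by $a$ (via the table of conjugation formulas) to obtain a second relation placing $v^{s_1}$ in $\langle u\rangle$, and conclude by B\'ezout's identity. (Incidentally, in the $\{3,6\}$ and $(3,3,3)$ cases you obtain $v^{s_1}=u^{s_1+s_2}$ where the paper writes $u^{-(s_1+s_2)}$; your sign is the correct one, though the discrepancy does not affect either conclusion.)
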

\begin{proof}
The index of $\langle u\rangle$ in $T$ is equal to $gcd(s_1,s_2)$, thus it is suficient to  prove that $v^{gcd(s_1,s_2)} \in \langle u \rangle$. Let  $x,y\in\mathbb{Z}$ be such that $gcd(s_1,s_2) = xs_1 + ys_2$  (given by Bézout's identity).  

Consider first the toroidal map $\{4,4\}_{(s_1,s_2)}$. Conjugating the equality  $u^{s_1} = v^{-s_2}$ by $a$, we get  $u^{s_2} = v^{s_1}$. 
Hence $ v^{gcd(s_1,s_2)} = v^{xs_1 + ys_2} = u^{-ys_1 + xs_2}\in \langle u\rangle$. 

For the toroidal map $\{3,6\}_{(s_1,s_2)}$ and hypermap $(3,3,3)_{(s_1,s_2)}$, conjugating the equality  $u^{s_1} = v^{-s_2}$ by $a$, we get $v^{s_1}=u^{-(s_1+s_2)}$. Thus $v^{gcd(s_1,s_2)} = u^{-xs_1 - (x+y)s_2}$. Hence, $v^{gcd(s_1,s_2)} \in \langle u \rangle$.
\end{proof}

Now as $T$ is a normal subgroup of $G$, $T$ is fixed-point-free. 
Hence if $T$ is transitive then it acts regularly on $n$. In that case $n=|T|$. 
In what follows we assume that $n\neq |T|$.

\begin{lemma}\label{Gimp}
If $n\neq |T|$ then $G\leq \Sym_k\wr\Sym_m$ where $k$ is the size of a $T$-orbit and $m$ is the number of $T$-orbits. Moreover
$m$ is a divisor of $\frac{|G|}{|T|}$ and  $k=\frac{|T|}{d}$, where $d$ is a divisor of $gcd(s_1,s_2)$.
\end{lemma}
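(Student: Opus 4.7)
The plan is to realize the $T$-orbits as a block system for $G$, apply the standard imprimitive-action machinery to get the wreath embedding and the divisibility of $m$, and then use Proposition~\ref{Tsubgroup} to constrain the common block size.

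First, because $T$ is normal in $G$ and $G$ is transitive on the $n$ points, the orbits of $T$ form a $G$-invariant partition, i.e.\ a system of blocks for $G$. All blocks have a common size $k$, and writing $m = n/k$ for the number of blocks one obtains the usual embedding $G \hookrightarrow \Sym_k \wr \Sym_m$. The induced action of $G$ on the set of blocks has $T$ in its kernel, so it factors through $G/T$; since $G$ is transitive on the blocks this gives $m \mid [G:T] = |G|/|T|$.

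Next, fix a point $x$ and put $T_x := T \cap G_x$. Orbit--stabilizer applied to the $T$-orbit of $x$ gives $k = [T:T_x]$, so $d := |T_x|$ satisfies $k = |T|/d$. By Proposition~\ref{Tsubgroup} every element of $T$ has a unique expression $u^i v^j$ with $0 \le j < gcd(s_1,s_2)$, and the assignment $u^i v^j \mapsto j$ is a surjective homomorphism $\pi : T \twoheadrightarrow \ZZ/gcd(s_1,s_2)\ZZ$ with kernel $\langle u\rangle$. It therefore suffices to show that $T_x \cap \langle u\rangle$ is trivial, for then $\pi$ restricts to an injection $T_x \hookrightarrow \ZZ/gcd(s_1,s_2)\ZZ$ and so $d \mid gcd(s_1,s_2)$.

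To argue that $\langle u\rangle$ is semiregular on the $n$ points, I would exploit the conjugacy relations in Table~\ref{translations}: the rotations $a$ and $b$ conjugate $u$ to $v^{\pm 1}$ (and, in the triangular/hexagonal cases, to $t^{\pm 1}$). If some nontrivial $u^r$ fixed $x$, then conjugating by $a$ and $b$ would produce corresponding fixed points for $v^{\pm r}$ and $t^{\pm r}$ in other blocks; combined with the fact that $T$ is abelian, so that the stabilizer $T_y$ is constant on each $T$-orbit, one can assemble a nontrivial element of $T$ lying in the intersection of all the point-stabilizers, contradicting the faithfulness of the action.

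The main obstacle is precisely this last step: the semiregularity of $\langle u\rangle$ is the only place where the specific presentation of $T$ (as opposed to a general normal abelian subgroup of $G$) is needed, and the bookkeeping of which stabilizers are literally equal versus only $G$-conjugate has to be done carefully, and probably separately for each of the three tesselation types because the conjugation tables in Table~\ref{translations} differ among them. Everything else is a standard application of the correspondence between faithful transitive actions and core-free subgroups.
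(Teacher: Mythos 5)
Your opening reduction is sound and matches the paper: the $T$-orbits form a block system, $G$ embeds in $\Sym_k\wr\Sym_m$, the action on blocks factors through $G/T$ so $m$ divides $|G|/|T|$, and orbit--stabilizer gives $k=[T:T_x]=|T|/d$ with $d=|T_x|$. The gap is exactly the step you flag as the main obstacle: semiregularity of $\langle u\rangle$ is not merely delicate, it is \emph{false}, and the paper's own Proposition~\ref{44deggcd} supplies counterexamples. Take $\{4,4\}_{(s,0)}$ with $s\geq 3$, so $\gcd(s_1,s_2)=s$, $|u|=s$ and $T=\langle u\rangle\times\langle v\rangle\cong\ZZ_s\times\ZZ_s$. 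Since $u^a=u^b=v^{-1}$ and $v^a=v^b=u$, the only $G$-conjugates of $\langle u\rangle$ are $\langle u\rangle$ and $\langle v\rangle$, which intersect trivially; hence $H=\langle u\rangle$ is core-free (it is the subgroup $\langle u^{s_1/d}v^{s_2/d}\rangle$ of Proposition~\ref{44deggcd} with $d=s$), and $G$ acts faithfully and transitively on the $4s$ cosets of $H$. In this representation $u$ fixes the point $H$, and $T_x=\langle u\rangle$ is precisely the kernel of your homomorphism $\pi$, so $\pi|_{T_x}$ is trivial rather than injective. The conclusion of the lemma still holds here ($d=s$ divides $\gcd(s,0)=s$), but it cannot be reached by embedding $T_x$ into $T/\langle u\rangle$. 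Chiral examples exist as well: in the rotational group of $\{4,4\}_{(2,4)}$ one has $v^2\in\langle u\rangle$, so the core-free subgroup $\langle uv^2\rangle$ of Proposition~\ref{44deggcd} is a nontrivial subgroup of $\langle u\rangle$.

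The reason your proposed rescue cannot work is that faithfulness only forbids a nontrivial translation from fixing \emph{every} point. Conjugating a fixed point of $u^r$ by $a$ or $b$ indeed produces fixed points of $v^{\pm r}$ (or $t^{\pm r}$), but these are fixed points of different elements at different points; the stabilizers $T_y$ are constant only within a $T$-orbit and merely conjugate across orbits, so there is no way to assemble a single nontrivial element of $T$ lying in all point stabilizers. In the example above every one of the $4s$ points is fixed by a nontrivial power of $u$ or of $v$, yet the action is faithful. This is why the paper's proof does not try to bound $|T_x|$ via $T/\langle u\rangle$: it instead analyses the regular action of $K=T/T_x$ on one block through the images $\sigma,\tau$ of $u,v$, proves $k=|u|\gcd(B,C)$ where $B=[K:\langle\sigma\rangle]$ and $C=[K:\langle\tau\rangle]$, and then shows $B$ and $C$ each divide $\gcd(s_1,s_2)$ using that $u^{s_1},u^{s_2}\in\langle v\rangle$ and $v^{s_1},v^{s_2}\in\langle u\rangle$ (via Proposition~\ref{Tsubgroup} and the conjugation relations). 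That argument tolerates $T_x\cap\langle u\rangle\neq 1$ — it simply corresponds to $|\sigma|<|u|$ — which is exactly the situation your approach cannot accommodate.
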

\begin{proof}
Suppose that $n\neq |T|$, then $T$ is intransitive and the $T$-orbits form a block system for $G$. 
Let $m$ be the number of block and $k$ be the size of a block for this block system. 
We have that  $G\leq \Sym_k\wr\Sym_m$. Let us now determine the size of a block.

Consider the induced action of $G$ on the set of $m$ blocks and its induced homomorphim $f:G\to S_m$. 
As $T$ lies in the kernel of this homomorphism, and $Im(f)\cong G/ker(f)$, $|Im(f)|$ is a divisor $\frac{|G|}{|T|}$. Particularly, $m$ is a divisor of $\frac{|G|}{|T|}$.
It remains to prove that $k=|u|d$, where $d$ is a divisor of $gcd(s_1,s_2)$.

Consider the actions $\sigma$ and $\tau$ of  $u$ and $v$, respectively, on a block and let $K:=\langle \sigma,\, \tau\rangle$.
 Let $A:=|\sigma|$, $B:=|K:\langle \sigma\rangle|$ and $C:=|K:\langle \tau\rangle|$.
We have that $K$ has order $AB$ and acts regularly on the block, hence $k=AB$. As $\sigma$ and $\tau$ commute, we have the following
$$K/ \langle \sigma \rangle=\{\langle \sigma \rangle,\, \langle \sigma \rangle\tau, \langle \sigma \rangle\tau^2,\ldots, \langle \sigma \rangle\tau^{B-1}\} \mbox{ and }$$
$$K/ \langle \tau \rangle=\{\langle \tau\rangle,\, \langle \tau \rangle\sigma, \langle \tau \rangle\sigma^2,\ldots, \langle \tau \rangle\sigma^{C-1}\}.$$
Thus $B$ divides $|\tau|$ and $C$ divides $A$. Let $D:=A/C$. As $k=AB=|\tau |C$ we have $|\tau |=DB$.
Now $$|u|=lcm(|\sigma |, |\tau |)=lcm(CD,\,BD)=D\,lcm(C,B)$$ and $$k=AB=DCB=D\,lcm(C,B)\,gcd(C,B)=|u|\,gcd(C,B)=\frac{|T|\,gcd(C,B)}{gcd(s_1,s_2)}$$ 

Let us now prove that $gcd(C,B)$ divides  $gcd(s_1,s_2)$. As both $u^{s_1}$ and $u^{s_2}$ are elements of $\langle v\rangle$, we have that $\sigma^{s_1}$ and $\sigma^{s_2}$ must be elements of $\langle \tau\rangle$, hence $C$ must divide both $s_1$ and $s_2$, meaning it must divide $gcd(s_1,s_2)$. Similarly $\tau^{s_1}$ and $\tau^{s_2}$ are elements of $\langle \sigma\rangle$, and therefore $B$ divides $gcd(s_1,s_2)$. 
Consequently,  $gcd(C,B)$ is a divisor of $gcd(s_1,s_2)$, as wanted.
\end{proof}

\section{Toroidal Maps of type $\{4,4\}$}

In this section let $G$ be the rotational group of  $\{4,4\}_{(s_1,s_2)}.$

\begin{prop}\label{44degdih}
Let  $s_1+s_2>2$.  The subgroups of $G$, $ \langle a\rangle$,   $\langle b\rangle $ and  $\langle ab \rangle$ are core-free.
\end{prop}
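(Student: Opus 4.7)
The strategy is to exploit the short exact sequence $1 \to T \to G \to G/T \to 1$, in which $T$ is the abelian translation subgroup of index $4$ and $G/T \cong \Cyc_{4}$ is generated by $aT$. Since $b = u^{-1}a$ with $u \in T$, we have $bT = aT$, and hence $(ab)T = a^{2}T$; in particular the elements $a^{i}$, $b^{i}$ for $1 \le i \le 3$ and the element $ab$ all project to nontrivial elements of $G/T$, so each of the cyclic subgroups $\langle a \rangle$, $\langle b \rangle$, $\langle ab \rangle$ meets $T$ trivially. It follows that any normal subgroup $N$ of $G$ contained in one of these three cyclic groups satisfies $N \cap T = \{1\}$ and therefore embeds into $G/T \cong \Cyc_{4}$.

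For $\langle a \rangle$ of order $4$, the only proper nontrivial subgroup is $\langle a^{2}\rangle$, so its core is one of $\{1\}$, $\langle a^{2}\rangle$, $\langle a\rangle$, and it is enough to rule out the normality of $\langle a^{2}\rangle$. Using $b = u^{-1}a$, together with the fact (read off from Table~\ref{translations}) that $a$ acts on $T$ as a rotation of order four and so $a^{2}$ inverts every element of $T$, one computes directly
\[
b\,a^{2}\,b^{-1} \;=\; u^{-1} a^{2} u \;=\; a^{2} u^{2}.
\]
This lies in $\langle a^{2}\rangle$ only if $u^{2} = 1$, that is, only if $|u|\le 2$. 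However $|u| = (s_{1}^{2}+s_{2}^{2})/gcd(s_{1},s_{2})$, and the hypothesis $s_{1}+s_{2}>2$ forces $|u|\ge 3$ in every admissible case. The analogous computation $a\,b^{2}\,a^{-1} = u^{2}b^{2}$ handles $\langle b\rangle$.

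For $\langle ab\rangle$ of order $2$, core-freeness is equivalent to $\langle ab\rangle$ not being normal, and hence to $ab$ not being central in $G$. If $ab$ were central, then $a(ab) = (ab)a$ would yield $ab = ba$ after cancelling $a$, so $G = \langle a,b\rangle$ would be abelian; but then conjugation by $a$ would act trivially on $T$, while the relations $u^{a} = v^{-1}$ and $v^{a} = u$ from Table~\ref{translations} would force $u = v^{-1}$ and $v = u$, so $v^{2} = 1$ and $|T|\le 2$. This contradicts $|T| = s_{1}^{2}+s_{2}^{2}\ge 5$, which holds under $s_{1}+s_{2}>2$.

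The only real obstacle is the bookkeeping in the identity $b\,a^{2}\,b^{-1} = a^{2}u^{2}$, which rests on the inversion action of $a^{2}$ on the abelian subgroup $T$. Once this is in hand, the arithmetic bound $|u| \ge 3$ under $s_{1}+s_{2}>2$ settles all three cases uniformly.
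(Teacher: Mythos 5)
Your proof is correct, but it follows a genuinely different route from the paper's. The paper works with intersections of two conjugates: since the core of a subgroup $H$ is contained in $H \cap H^g$ for every $g\in G$, it suffices to check that $\langle a\rangle \cap \langle a\rangle^b$, $\langle b\rangle \cap \langle b\rangle^a$ and $\langle ab\rangle \cap \langle ab\rangle^b$ are trivial, and this is done by noting that a relation $ba^i = a^jb$ with $i,j\in\{1,2,3\}$ would force $s_1+s_2\le 2$ --- a geometric fact about the tessellation, stated without further detail. You instead exploit the extension $1 \to T \to G \to \Cyc_4 \to 1$: after checking that each of the three cyclic subgroups meets $T$ trivially (because $bT=aT$ generates $G/T$), you enumerate the candidate cores and eliminate them by explicit conjugation formulas --- $ba^2b^{-1}=a^2u^2$, $ab^2a^{-1}=u^2b^2$, and centrality of $ab$ forcing $G$ abelian --- combined with the arithmetic bounds $|u|\ge 3$ and $|T|\ge 5$, which indeed follow from $s_1+s_2>2$ (the cases $|u|\le 2$ are exactly $(s_1,s_2)\in\{(1,0),(0,1),(1,1),(2,0),(0,2)\}$). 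I verified your computations: they follow from $u=ab^{-1}$, the rules $u^a=v^{-1}$, $v^a=u$ of Table~\ref{translations}, and the resulting inversion action of $a^2$ (and $b^2$) on $T$; your reduction of core-freeness of $\langle a\rangle$ to non-normality of $\langle a^2\rangle$ is also valid, since $\langle a^2\rangle$ is characteristic in $\langle a\rangle$. The paper's approach is shorter and yields the slightly stronger fact that two conjugates already intersect trivially, but it buries the role of the hypothesis in an unproved geometric claim; your approach is purely algebraic, self-contained, and makes the dependence on $s_1+s_2>2$ quantitative, at the price of invoking the paper's formula $|u|=|T|/\gcd(s_1,s_2)$, which the paper states without proof.
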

\begin{proof}
Let $H = \langle a\rangle$ and consider the intersection $H\cap H^b = \langle a \rangle \cap \langle b^{-1} a b\rangle$. 
 If $x\in H\cap H^b$ and $x$ is nontrivial then, for  some $i,j\in\{1,2,3\}$, $ x = a^i = b^{-1} a^j b$, or equivalently $ba^i  = a^jb$. This only can happen if $s_1+s_2\leq 2$ which is not the case.

 For $H=\langle b\rangle$  the intersections $H\cap H^a$ is trivial and for  $H=\langle ab\rangle$ the intersection $H\cap H^b$ is trivial.
 \end{proof}

\begin{prop}\label{44deggcd}
 Let $d$ be a divisor of $gcd(s_1,s_2)$. If $s_1+s_2>2$, then  $\langle u^{s_1/d}v^{s_2/d}\rangle $ and
$\langle a^2,  u^{s_1/d}v^{s_2/d}\rangle$ are core-free subgroups of $G$. Moreover these subgroups of $G$ have indexes $\frac{4(s_1^2+s_2^2)}{d}$ and  $\frac{2(s_1^2+s_2^2)}{d}$, respectively. 
\end{prop}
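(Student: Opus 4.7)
The plan is as follows. Set $w := u^{s_1/d}v^{s_2/d}$, and following the proof of Proposition~\ref{Tsubgroup} identify $T$ with $\mathbb{Z}^2/\Lambda$, where $\Lambda = \langle(s_1,s_2),(-s_2,s_1)\rangle$ (the second generator coming from conjugating $u^{s_1}v^{s_2}=1$ by $a$). In these coordinates $w$ is the class of $(s_1/d,s_2/d)$; writing $k(s_1/d,s_2/d)=\alpha(s_1,s_2)+\beta(-s_2,s_1)$ and diagonalising the resulting $2\times 2$ system forces $\alpha=k/d$ and $\beta=0$, so integrality gives $d\mid k$ and hence $|w|=d$. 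This yields $[G:\langle w\rangle]=4(s_1^2+s_2^2)/d$. Since $a$-conjugation sends $u\mapsto v$ and $v\mapsto u^{-1}$, the element $a^2$ inverts every element of $T$, hence normalises $\langle w\rangle$; combined with $a^2\notin T$ (because $G/T$ is cyclic of order $4$, generated by the image of $a$), this gives $|\langle a^2,w\rangle|=2d$ and $[G:\langle a^2,w\rangle]=2(s_1^2+s_2^2)/d$.

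For core-freeness of $\langle w\rangle$, any normal subgroup of $G$ contained in $T$ is $a$-invariant. The subgroups of $\langle w\rangle$ are the cyclic groups $\langle(s_1/d',s_2/d')\rangle$ modulo $\Lambda$ for $d'\mid d$, and imposing $a$-invariance --- that is, $(-s_2/d',s_1/d')\equiv m(s_1/d',s_2/d')\pmod{\Lambda}$ for some integer $m$ --- the same diagonalisation now produces $\beta=1/d'$. Integrality forces $d'=1$, so the only $G$-normal subgroup inside $\langle w\rangle$ is trivial; thus $\langle w\rangle$ is core-free.

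For $H_2:=\langle a^2,w\rangle$ one first observes that $H_2\cap T=\langle w\rangle$, since elements of $H_2$ are uniquely of the form $a^{2\varepsilon}w^i$ and $a^2\notin T$. For any $N\trianglelefteq G$ with $N\subseteq H_2$, the intersection $N\cap T$ is $G$-normal inside $\langle w\rangle$ and therefore trivial by the previous step, so $N$ embeds into $G/T\cong\mathbb{Z}_4$ with image inside $\langle\overline{a^2}\rangle$, forcing $|N|\leq 2$. The crux is excluding $|N|=2$: the unique involution $x=a^2 w^j$ of such an $N$ must be central, commutation with $a$ forces $w^j$ to be $a$-fixed, which by the lattice argument gives $j=0$, hence $x=a^2$; then using $b=av$ (from $v=a^{-1}b$) one computes $ba^2b^{-1}=a^2u^2$, so centrality of $a^2$ requires $u^2=1$. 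But $|u|=(s_1^2+s_2^2)/\gcd(s_1,s_2)>2$ under the hypothesis $s_1+s_2>2$ together with the standing exclusions on $(s_1,s_2)$, a contradiction. The main obstacle will be precisely this last step, and it is where $s_1+s_2>2$ is needed: in the degenerate cases $(2,0),(0,2)$ one has $u^2=1$ and $a^2$ actually is central, so the proposition would fail. The rest is routine lattice bookkeeping on $\mathbb{Z}^2/\Lambda$.
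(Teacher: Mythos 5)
Your proof is correct, but it reaches core-freeness by a genuinely different route than the paper. The paper argues via intersections of conjugates: for $H=\langle u^{s_1/d}v^{s_2/d}\rangle$ it shows $H\cap H^a=\{id\}$ directly (the core lies in this intersection), and for $H=\langle a^2,u^{s_1/d}v^{s_2/d}\rangle$ it shows $H\cap H^a\leq\langle a^2\rangle$ and $H^b\cap H^{ab}\leq\langle a^2\rangle^b$, then concludes since $\langle a^2\rangle\cap\langle a^2\rangle^b$ is trivial when $s_1+s_2>2$. You instead classify the candidate normal subgroups themselves: the lattice computation on $\mathbb{Z}^2/\Lambda$ shows the only $a$-invariant subgroup of $\langle w\rangle$ is trivial, and for $\langle a^2,w\rangle$ you use $N\cap T=\{id\}$ to embed $N$ into $G/T\cong\mathbb{Z}_4$, then kill the order-$2$ case with the observation that a normal subgroup of order $2$ is central together with the explicit computation $ba^2b^{-1}=a^2u^2$ (which I verified, as well as your identity $v^{a^{-1}}=u^{-1}$). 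The underlying linear algebra over the lattice $\Lambda=\langle(s_1,s_2),(-s_2,s_1)\rangle$ is the same engine as the paper's geometric step ``the origin and the vertex $i/d\,(s_1,s_2)+j/d\,(-s_2,s_1)$ are identical only if $i=j=0$,'' but your architecture buys two things the paper leaves implicit: a proof (rather than an assertion) that $a^2\notin T$, via $G/T\cong\mathbb{Z}_4$, and a precise identification of where $s_1+s_2>2$ is indispensable, namely ruling out $u^2=id$, which also explains why the statement genuinely fails for $(s_1,s_2)\in\{(2,0),(0,2)\}$ where $a^2$ is central. The paper's argument is shorter; yours is more self-contained and more informative about the boundary cases.
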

\begin{proof}
 Let $H = \langle u^{s_1/d}v^{s_2/d}\rangle$, with $d$ being a divisor of $gcd(s_1,s_2)$. Note that $|H|$ is $d$ hence the index $|G:H|$ is as in the statement of this proposition.
 
Consider $\gamma\in H \cap H^a$.  Then  $\gamma = (u^{s_1/d}v^{s_2/d})^i = (v^{-s_1/d}u^{s_2/d})^j$, with $i,j\in\{0,\ldots, d-1\}$. 
This implies that $(u^{s_1}v^{s_2})^{i/d}(u^{-s_2}v^{s_1})^{j/d}=id$. Geometrically this means that the origin $(0,0)$ and the vertex $(x,y)=i/d (s_1,s_2)+j/d(-s_2,s_1)$ are identical. As $i,j\in\{0,\ldots, d-1\}$, this is only possible when $i=j=0$. With this we have shown that $H\cap H^{a}=\{id\}$. 
  
 Now consider $H = \langle a^2, u^{s_1/d}v^{s_2/d}\rangle$, with $d$ being a divisor of $gcd(s_1,s_2)$. 
For $s_1+s_2>2$ we have that $a^2\notin T$ and we have the following equalities, which prove that  $\langle u^{s_1/d}v^{s_2/d}\rangle$ is a normal subgroup of $H$.
\[\begin{array}{l}
 a^{-2} u a^2=a^{-1}v^{-1}a=u^{-1}\\
 a^{-2} v a^2=a^{-1}ua=v^{-1}\\
\end{array}\] 
Hence $H= \langle u^{s_1/d}v^{s_2/d}\rangle \rtimes  \langle a^2\rangle$.

Let $\gamma\in H\cap H^a$. 
$$\gamma = (u^{s_1/d}v^{s_2/d})^i (a^2)^l = (v^{-s_1/d}u^{s_2/d})^j (a^2)^q,$$
with $i,j\in\{0,\ldots,d-1\}$ and $l,q\in\{0,1\}$. Suppose that $(l,q)= (0,0)$. Then $\gamma = (u^{s_1/d}v^{s_2/d})^i = (v^{-s_1/d}u^{s_2/d})^j$, which we gives $\gamma = id$, as we have seen in the previous case. 
If $(l,q)\in\{ (0,1),(1,0)\}$ then $a^2\in T$, a contradiction. Hence $(l,q)= (1,1)$ and, consequently, $(i,j)=(0,0)$, giving that $\gamma \in \langle a^2\rangle$.
This proves that  $H\cap H^a$ is a subgroup of $\langle a^2\rangle$.

Using similar calculations we get that  $H^b \cap H^{ab} \leq  \langle a^2\rangle^b$. 
Hence for  $s_1+s_2>2$,  $ H\cap H^a \cap H^b \cap H^{ab}$ is trivial.
Finally as $|H|=2d$, we have that $|G:H|=2\frac{s_1^2+s_2^2}{d}$.

\end{proof}

\begin{thm}
Let $s_1$ and $s_2$ be nonnegative integers and $D$  the set of divisors of $gcd(s_1,s_2)$. Suppose that $G$ is the rotational group of a toroidal map $\{4,4\}_{(s_1,s_2)}$. The set of all possible degrees of a faithful transitive permutation representation of $G$ is equal to
 $$\Big\{s_1^2+s_2^2\Big\}\cup  \Big\{\frac{2(s_1^2+s_2^2)}{d}\,|\, d\in D\Big\}\cup \Big\{\frac{4(s_1^2+s_2^2)}{d}\,|\, d\in D\Big\}$$
 when  $s_1+s_2>2$  and to $\{8,16\}$ when $(s_1,s_2)\in\{(0,2),(2,0)\}$.
\end{thm}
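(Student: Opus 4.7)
The strategy is to establish both inclusions: every degree of a faithful transitive permutation representation of $G$ lies in the claimed set, and conversely every element of the claimed set is realised by some core-free subgroup. The upper bound will come from Lemma~\ref{Gimp}, and the achievability from Propositions~\ref{44degdih} and \ref{44deggcd}.

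For the upper bound, I would first record that $|G|=4(s_1^2+s_2^2)=4|T|$, since the rotation $a$ has order $4$ and $T$ acts regularly on the vertex set. If $T$ acts transitively on the $n$ points, then by its regularity $n=|T|=s_1^2+s_2^2$, giving the first listed value. Otherwise Lemma~\ref{Gimp} applies: $n=km$ with $k=|T|/d$ for some $d\mid\gcd(s_1,s_2)$, and $m\mid |G|/|T|=4$, so $m\in\{1,2,4\}$. The case $m=1$ would mean a single $T$-orbit of size $k$, but since $T$ is normal and therefore fixed-point-free it would act regularly on this orbit, forcing $k=|T|$ and $n=|T|$, contradicting the assumption $n\neq|T|$. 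Hence $m\in\{2,4\}$ and $n\in\{2(s_1^2+s_2^2)/d,\,4(s_1^2+s_2^2)/d\}$ for some $d\mid\gcd(s_1,s_2)$, which is precisely the required containment.

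For the lower bound when $s_1+s_2>2$, I would realise each claimed degree by exhibiting an explicit core-free subgroup. Proposition~\ref{44degdih} gives that $\langle a\rangle$ is core-free; since $|a|=4$, its index is $s_1^2+s_2^2$, covering the first value. For every divisor $d$ of $\gcd(s_1,s_2)$, Proposition~\ref{44deggcd} provides the core-free subgroup $\langle u^{s_1/d}v^{s_2/d}\rangle$ of index $4(s_1^2+s_2^2)/d$ and the core-free subgroup $\langle a^2,u^{s_1/d}v^{s_2/d}\rangle$ of index $2(s_1^2+s_2^2)/d$, sweeping out the remaining two families.

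The remaining work is the degenerate case $(s_1,s_2)\in\{(0,2),(2,0)\}$, where the hypothesis $s_1+s_2>2$ of the two propositions fails. Here $|G|=16$ and $|T|=4$, so the plan is to inspect this small group directly, either by hand or using the {\sc corefreesub} package, to enumerate the indices of all core-free subgroups and verify that these indices are exactly $\{8,16\}$. The main obstacle I expect is explaining cleanly why $n=|T|=4$ is genuinely absent from the degree set: every subgroup of order $4$ in this particular $G$ turns out to contain a nontrivial normal subgroup of $G$ (typically either the central element $a^2$ or a central involution of $T$), so no such subgroup is core-free. This is a real defect in the general formula rather than a bookkeeping omission, and it justifies the separate statement for the two exceptional parameter pairs.
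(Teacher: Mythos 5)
Your proposal is correct and follows essentially the same route as the paper: the containment follows from regularity of $T$ when it is transitive together with Lemma~\ref{Gimp} when it is not, achievability follows from Propositions~\ref{44degdih} and~\ref{44deggcd}, and the exceptional pairs $(0,2)$, $(2,0)$ are settled by direct computation with the \textsc{corefreesub} package. The only cosmetic differences are that the paper realises the minimal degree with $\langle b\rangle$ where you use $\langle a\rangle$ (both have order $4$, so either works), and that you make explicit the exclusion of the case $m=1$ and the reason the degree $s_1^2+s_2^2=4$ is absent in the exceptional case, points the paper leaves implicit.
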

\begin{proof}
Let $s_1+s_2>2$. By Proposition~\ref{44degdih} $\langle b\rangle$ is core-free subgroup of $G$. As $|G: \langle b\rangle|=s_1^2+s_2^2$ there is a  faithful transitive permutation representation of $G$ of degree $n=s_1^2+s_2^2$.
If $T$ is transitive then the degree of $G$ is equals to the size of $T$, which is $s_1^2+s_2^2$. Then we may assume that $T$ is intransitive. In this case, by Proposition~\ref{Gimp}, the degree of $G$ is among the values given in the statement of this theorem.
Finally, by Proposition~\ref{44deggcd}, there exists a pair of  core-free subgroups of $G$ which have indexes equal to $\frac{2(s_1^2+s_2^2)}{d}$ and $\frac{4(s_1^2+s_2^2)}{d}$.

The cases  $(s_1,s_2)=(0,2)$ and $(s_1,s_2)=(2,0)$ can be  computed using the ``corefreesub'' package \cite{CFS}.

\end{proof}

\section{Toroidal Maps $\{3,6\}$}

In this section let $G$ be the rotational group of  $\{3,6\}_{(s_1,s_2)}.$


\begin{prop}\label{36degdih}
Let  $s_1+s_2>2$.  The subgroups of $G$, $ \langle a\rangle$,   $\langle b\rangle $ and  $\langle ab \rangle$ are core-free. \end{prop}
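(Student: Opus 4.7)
The plan is to follow the template of Proposition~\ref{44degdih}, adapted to the $\{3,6\}$ rotational group $G$ in which $|a|=3$, $|b|=6$ and $|ab|=2$. In each of the three cases it suffices to exhibit a single conjugate of the subgroup whose intersection with the subgroup is trivial, since this forces the core in $G$ to be trivial.

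For $H=\langle a\rangle$, I would examine $H\cap H^{b}=\langle a\rangle\cap\langle b^{-1}ab\rangle$. A nontrivial element produces an identity $a^{i}=b^{-1}a^{j}b$ with $i,j\in\{1,2\}$, equivalently $ba^{i}=a^{j}b$. In the planar tessellation, $a^{i}$ and $b^{-1}a^{j}b$ are rotations of order three centred at distinct triangular face-centres, so the equality fails there; hence in the toroidal quotient it can hold only if the element $a^{i}(b^{-1}a^{-j}b)$ lies in the kernel of the quotient map. Matching rotation angles forces $i=j$, so this element is a commutator lying in $T$, and via the conjugation rules of Table~\ref{translations} it can be written as an explicit short word in $u$ and $v$. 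A direct inspection shows this word is not killed by the relation $u^{s_{1}}v^{s_{2}}=id$ as long as $s_{1}+s_{2}>2$.

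The subgroups $H=\langle b\rangle$ and $H=\langle ab\rangle$ are treated by exactly the same scheme. For $\langle b\rangle$ I would take $H\cap H^{a}$ and run the argument starting from $b^{i}=a^{-1}b^{j}a$ with $i,j\in\{1,\ldots,5\}$; the mismatch of the vertex centres of $b$ and $a^{-1}ba$ in the planar tessellation again reduces everything to a short commutator-translation. For $\langle ab\rangle$, since $|ab|=2$, only the case $ab=g^{-1}(ab)g$ with $g\in\{a,b\}$ needs to be ruled out, and it again becomes the nontriviality in $G$ of a short translation produced by a commutator of $ab$ with a rotation of different type.

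The only real obstacle is the arithmetic verification that, in each of the three cases, the short translation produced is genuinely nontrivial in $G$ precisely when $s_{1}+s_{2}>2$; this is carried out using Table~\ref{translations} and the fact that the degenerate parameters $(1,0),(0,1),(1,1)$ are excluded from the outset. Once this bookkeeping is done the three statements follow uniformly.
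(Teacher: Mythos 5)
Your proposal takes essentially the same route as the paper's proof: for each of $\langle a\rangle$, $\langle b\rangle$ and $\langle ab\rangle$ you intersect with a single conjugate (by $b$, by $a$, and by $b$, respectively) and reduce triviality of that intersection to the geometric impossibility, for $s_1+s_2>2$, of a relation such as $ba^i=a^jb$, which is exactly the paper's ``flags of adjacent faces are identified'' argument. Your extra steps --- angle matching forcing $i=j$ and recognizing the resulting commutator as a short translation not killed by the lattice relation (indeed $ab^{-1}a^{-1}b=v$ in the notation of Table~\ref{translations}) --- simply make explicit what the paper states tersely, so the proposal is correct and matches the paper's approach.
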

\begin{proof}
 Let $H = \langle a\rangle$ and consider the intersection $H\cap H^b = \langle a \rangle \cap \langle b^{-1} a b\rangle$. If $\gamma\in H\cap H^b$ then we have that $ \gamma = a^i = b^{-1} a^j b$, for $i,j\in\{0,1,2\}$. Then we have $ba^i  = a^jb$ which is only possible when flags of adjacent faces are identified, but that is never the case when  $s_1+s_2>2$. Hence $\gamma=id$. 

For $H = \langle b\rangle$ (resp. $H = \langle ab\rangle$) the intersections $H\cap H^a$ (resp. $H\cap H^b$) are trivial.
\end{proof}

\begin{prop}\label{36deggcd}
 Let $d$ be a divisor of $gcd(s_1,s_2)$. If $s_1+s_2>2$, then   $\langle u^{s_1/d}v^{s_2/d}\rangle$ and
$\langle b^3,  u^{s_1/d}v^{s_2/d}\rangle$ are core-free subgroups of $G$. Moreover these subgroups of $G$ have indexes $\frac{6(s_1^2+s_1s_2+s_2^2)}{d}$ and  $\frac{3(s_1^2+s_1s_2+s_2^2)}{d}$, respectively. 
\end{prop}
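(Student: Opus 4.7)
The plan is to adapt the template of Propositions~\ref{44deggcd} and~\ref{36degdih} (which handle the analogous $\{4,4\}$ case and the dihedral cases for $\{3,6\}$, respectively) to the triangular lattice of $\{3,6\}$. Structural facts (orders, indices, normalization) will come from the conjugation rules in the table for $\{3,6\}$, and core-freeness will be established by analyzing intersections of conjugates.

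For $H_1 = \langle u^{s_1/d}v^{s_2/d}\rangle$: the torus relation $u^{s_1}v^{s_2}=id$ yields $(u^{s_1/d}v^{s_2/d})^d=id$, and the geometric vector argument below shows the order is exactly $d$; hence $|G:H_1|$ takes the stated value. Using the $\{3,6\}$ table ($u^a = v^{-1}$, $v^a = t^{-1}$, $t = u^{-1}v$), I compute $(u^{s_1/d}v^{s_2/d})^a = u^{s_2/d}v^{-(s_1+s_2)/d}$. For $\gamma \in H_1 \cap H_1^a$, equating the two expressions forces the vector $\tfrac{i}{d}(s_1,s_2) + \tfrac{j}{d}(-s_2, s_1+s_2)$ to lie in the torus lattice $\Lambda = \langle(s_1,s_2),(-s_2,s_1+s_2)\rangle$; since $0\leq i, j < d$, both lattice coefficients must vanish, so $\gamma = id$.

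For $H = \langle b^3, u^{s_1/d}v^{s_2/d}\rangle$: iterating $u^b = t^{-1}$, $v^b = u$, $t^b = v$ gives $u^{b^3}=u^{-1}$ and $v^{b^3}=v^{-1}$, so $b^3$ inverts both $u$ and $v$ and normalizes $\langle u^{s_1/d}v^{s_2/d}\rangle$. As $b^3$ is a half-turn around a vertex and not a translation, $b^3 \notin T$ for $s_1+s_2 > 2$; thus $H = \langle u^{s_1/d}v^{s_2/d}\rangle \rtimes \langle b^3\rangle$ has order $2d$ and the stated index. For core-freeness I mimic the proof of Proposition~\ref{44deggcd} with $b^3$ in the role of $a^2$ and with conjugation by $b$ (which centralizes $b^3$) in the role of conjugation by $a$. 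Computing $(u^{s_1/d}v^{s_2/d})^b = u^{(s_1+s_2)/d}v^{-s_1/d}$ and splitting on $(l,q) \in \{0,1\}^2$: the translation case $(0,0)$ reduces, using the identity $(s_1+s_2,-s_1) = (s_1,s_2) - (-s_2,s_1+s_2)$, to requiring $\tfrac{i-j}{d}(s_1,s_2) + \tfrac{j}{d}(-s_2, s_1+s_2) \in \Lambda$, forcing $i=j=0$; the mixed cases $(0,1)$ and $(1,0)$ force $b^3 \in T$, a contradiction; and $(1,1)$ collapses to $(0,0)$ after cancelling $b^3$. Hence $H \cap H^b \subseteq \langle b^3\rangle$, and conjugating by $a$ gives $H^a \cap H^{ba} \subseteq \langle b^3\rangle^a$. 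Therefore $H \cap H^a \cap H^b \cap H^{ba} \subseteq \langle b^3\rangle \cap \langle b^3\rangle^a$, and this last intersection is trivial since $b^3$ and $a^{-1}b^3a$ are distinct involutions, their commutator being the composition of two half-turns around distinct vertices, i.e., a nontrivial translation not absorbed by $\Lambda$ when $s_1+s_2 > 2$.

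The main obstacle is the vector decomposition $(s_1+s_2,-s_1) = (s_1,s_2) - (-s_2,s_1+s_2)$ needed for case $(0,0)$; recognizing that the image of $(s_1,s_2)$ under the rotation corresponding to conjugation by $b$ has this specific expansion in the $\{3,6\}$ lattice basis is the key geometric observation. The remaining items—the normalization computation, the case analysis, and the final $\langle b^3\rangle \cap \langle b^3\rangle^a = \{id\}$ step—are routine bookkeeping parallel to Proposition~\ref{44deggcd}.
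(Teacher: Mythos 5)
Your proposal is correct and follows essentially the same route as the paper's proof: the same lattice-coordinate argument shows $\langle u^{s_1/d}v^{s_2/d}\rangle$ meets its $a$-conjugate trivially, the same computation gives $H=\langle u^{s_1/d}v^{s_2/d}\rangle\rtimes\langle b^3\rangle$, the same four-way case analysis on $(l,q)$ traps an intersection of conjugates inside $\langle b^3\rangle$, and the same final step intersects $\langle b^3\rangle$ with a conjugate of itself by $a^{\pm 1}$. The only deviations are cosmetic: you intersect $H$ with $H^b$ (paying for it with the identity $(s_1+s_2,-s_1)=(s_1,s_2)-(-s_2,s_1+s_2)$), whereas the paper intersects with $H^{b^2}$ so that the first part's computation can be reused verbatim (since $b^2$ and $a$ act identically on $T$); and the element you call the ``commutator'' of the two half-turns is really their product $b^3\cdot(b^3)^a$, which is indeed the nontrivial translation the argument needs.
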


\begin{proof}
 Let $H = \langle u^{s_1/d}v^{s_2/d}\rangle$, with $d$ being a divisor of $gcd(s_1,s_2)$. 
Consider $\gamma\in H \cap H^a$. 
 Then $\gamma = (u^{s_1/d}v^{s_2/d})^i = (v^{(-s_2-s_1)/d}u^{s_2/d})^j$, with $i,j\in\{0,\ldots, d-1\}$.
Then $u^{\frac{s_1i - s_2j}{d}} v^{\frac{s_1j + s_2i - s_2j}{d}} = id$. Geometrically, this implies that the origin $(0,0)$ and the point with coordinates $(s_1,s_2)i/d+(-s_2,s_1+s_2)j/d$ are vertices
 of the parallelogram used in the construction of the map. As $i,j\in\{0,\ldots, d-1\}$, we must have $i=j=0$. This proves that $H \cap H^a$ is trivial.
  
 Now let $H = \langle b^3,  u^{s_1/d}v^{s_2/d}\rangle$, with $d$ being a divisor of $gcd(s_1,s_2)$. Let us first prove that we can write $H$ as a semi-direct product  $\langle u^{s_1/d}v^{s_2/d}\rangle \rtimes \langle b^3\rangle$. For $s_1+s_2>2$ we have that $b^3 \notin T$ and
 the following equalities show that $ \langle u^{s_1/d}v^{s_2/d}\rangle$ is a normal subgroup of $H$.
 \[\begin{array}{l}
 b^{-3}ub^3=b^{-2}t^{-1}b^2=b^{-1}v^{-1}b=u^{-1}\\[5pt]
 b^{-3}vb^3=b^{-2}ub^2=b^{-1}t^{-1}b=v^{-1}
 \end{array}\]

Let us prove that $H\cap H^{b^2}\leq \langle b^3\rangle$. If $\gamma\in H\cap H^{b^2}$, then $\gamma = (b^3)^l (u^{s_1/d}v^{s_2/d})^i = (b^3)^q (v^{(-s_1-s_2)/d}u^{s_2/d})^j$, with $i,j \in\{0,\ldots,d-1\}$ and $l,q\in\{0,1\}$. 
Now if  $(l,q) = (0,0)$, then, as we have proven before, $(i,j)=(0,0)$, hence $\gamma = id$.  
If $(l,q)\in\{(0,1),(1,0)$ then $b^3\in T$, a contradiction. 
If $(l,q)=(1,1)$, then  $(i,j)=(0,0)$ and $\gamma= b^3$.  Consequently, $H\cap H^{b^2}\leq \langle b^3\rangle$, as claimed.

Similarly we have $H^{a^{-1}} \cap H^{b^2a^{-1}} \leq \langle ab^3a^{-1}\rangle$.
As for $s_1+s_2>2$, $\langle b^3\rangle\cap \langle ab^3a^{-1}\rangle$ is trivial, $H$ is a core-free subgroup of $G$, as wanted.
 
\end{proof}

Combining Lemma~\ref{Gimp} and Proposition~\ref{36degdih}, to determine all the possibilities for the degree $n$ of $G$ it remains to consider the case $m=2$, that is, the case where $T$ has exactly two orbits.
The following proposition shows that in that case $n=2|T|=2(s_1^2+s_1s_2+s_2^2)$.

\begin{prop}\label{36m2}
 If $m=2$ then $k=|T|$.
\end{prop}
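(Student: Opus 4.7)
Under the assumption $m=2$, I plan to show that for any base point $\alpha$ the $T$-stabiliser $N:=T\cap G_\alpha$ is trivial; since $k$ equals the size of the $T$-orbit of $\alpha$, which is $[T:N]$, this will give $k=|T|$ as required.

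The first step is to identify the setwise stabiliser $G_B$ of the $T$-orbit $B$ of $\alpha$. Since $T\leq G_B$ and $[G:G_B]=m=2$, $G_B$ is the unique index-$2$ subgroup of $G$ containing $T$. Using $u=ab^{-2}$, we have $a\equiv b^2\pmod T$, so $G/T=\langle bT\rangle$ is cyclic of order $|G|/|T|=6$, and its unique index-$2$ subgroup is $\langle aT\rangle$. Hence $G_B=\langle T,a\rangle=T\rtimes\langle a\rangle$ (the semidirect decomposition is genuine since $a\notin T$ when $s_1+s_2>2$).

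Because $T$ is abelian and transitive on $B$, every point of $B$ has the same $T$-stabiliser $N$; hence $hNh^{-1}=T\cap G_{h\alpha}=N$ for every $h\in G_B$. Thus $N$ is normal in $G_B$, and in particular invariant under conjugation by $a$. The crucial step, and the main obstacle, is to show that $N$ is also invariant under conjugation by $b$. Reading from Table~\ref{translations}, $u^b=v^{-1}u$ and $v^b=u$, whereas $u^a=v^{-1}$ and $v^a=v^{-1}u$; direct comparison gives $u^b=u\cdot u^a$ and $v^b=v\cdot v^a$. Since $T$ is abelian, this extends to the identity $x^b=x\cdot x^a$ for every $x\in T$: conjugation by $b$ equals the identity plus conjugation by $a$ as endomorphisms of $T$. (Geometrically, the $60^\circ$ hexagonal rotation factors as the identity composed with a $120^\circ$ rotation.) Consequently, for any $x\in N$, both $x$ and $x^a$ lie in $N$, so $x^b\in N$; hence $N^b\subseteq N$, and by cardinality $N^b=N$.

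Combining these, $N$ is normalised by $T$, by $a$ and by $b$, so $N$ is normal in $G$. Since $N\leq G_\alpha$ and $G_\alpha$ is core-free (being the stabiliser of a faithful transitive action), $N=\{id\}$, whence $k=[T:N]=|T|$.
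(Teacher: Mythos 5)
Your proof is correct, and it takes a genuinely different route from the paper's. The paper works inside the two blocks: writing $u_i,v_i$ for the actions of $u,v$ on block $B_i$, it observes (as you do) that $a$ fixes the blocks while $b$ swaps them, transports a local relation $u_1^d=v_1^j$ through conjugation by $a$, $b$ and $ab$ into the global relation $u^d=v^j$, and then reuses the divisibility analysis from the proof of Proposition~\ref{Tsubgroup} to force $d=\gcd(s_1,s_2)$, giving $k=|u|\,d=|T|$. You instead take the kernel $N=T\cap G_\alpha$ of $T$ acting on a block and prove it is normal in all of $G$, so that core-freeness of the point stabiliser forces $N=\{id\}$ and $k=[T:N]=|T|$. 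Both of your invariance steps are sound: for $a$-invariance you only need $a\in G_B$, which already follows from $a^3=id$ and $m=2$, so your fuller identification of $G_B$ via $G/T$ cyclic of order $6$ (correct, since $|G|=6|T|$, as the index computations $|G:\langle b\rangle|=|T|$ in the paper confirm) could be dropped, removing any dependence on the order of $G$; for $b$-invariance, the identity $x^b=x\,x^a$ on $T$ is verified on the generators $u,v$ from the translation table and extends to all of $T$ by abelianity, exactly as you say. As for what each approach buys: yours is shorter and more structural, it avoids re-entering the proof of Proposition~\ref{Tsubgroup}, it proves strictly more (namely that $T$ acts semiregularly --- with trivial kernel --- on each block, not merely that the block has size $|T|$), and it isolates the lattice identity $\zeta_6=1+\zeta_3$ as the precise reason the hexagonal case is rigid; the paper's computation, by contrast, stays entirely within the abstract structure of $T$, never invokes faithfulness of the representation directly, and matches the block-by-block computational style of Lemma~\ref{Gimp}, which is why it fits naturally into the surrounding classification.
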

\begin{proof}
 Suppose that $m=2$. Let $B_1$ and $B_2$ be the orbits of $T$ and, for $i\in\{1,2\}$ denote by $u_i$ and $v_i$ the actions of $u$ and $v$ on the block $B_i$, respectively. 
 As  $a^3=id$, $a$ must fix the blocks, and by transitivity of $G$, $b$  must swaps the blocks.
 Then $|u_1|=|v_1|$ and $|u_2|=|u_1|$. Hence $|u_1|=|u|$. 
Let $K:=\langle u_1,v_1\rangle$ and $d:=|K:\langle u_1\rangle|=|K:\langle v_1\rangle|$. We have that $d$ is a divisor of $gcd(s_1,s_2)$. 

Let $j\in\{0,\ldots,|u|-1\}$ be such that $u_1^d = v_1^j$. Conjugating this equality  by $a$, $b$ and $ab$, respectively, we get the equalities 
$$v_1^d = u_1^{d-j},\,  v_2^d = u_ 2^{d-j}\mbox{ and }u_2^d = u_2^{d-j}v_2^{j-d}.$$
 From the last two relations we have that $u_2^d = v_2^j$. 
Hence, $u^d = v^j$. 
From the proof of Proposition~\ref{Tsubgroup}, we have that both $d$ and $j$ must be multiples of $gcd(s_1,s_2)$. Since $d$ must divide $gcd(s_1,s_2)$, we get that $d=gcd(s_1,s_2)$. 
As $|u| = \frac{|T|}{gcd(s_1,s_2)}$  then the size of the block is $$k =|K|= |u|d = \frac{|T|}{gcd(s_1,s_2)}\cdot gcd(s_1,s_2) = |T|.$$

\end{proof}

\begin{thm}
Let $s_1$ and $s_2$ be nonnegative integers and $D$  the set of divisors of $gcd(s_1,s_2)$.
 Suppose that $G$ is the rotational group of a toroidal map $\{3,6\}_{(s_1,s_2)}$. 
 The set of all possible degrees of a faithful transitive permutation representation of $G$ is equal to
 \[\begin{array}{c}
 \Big\{s_1^2+s_1s_2+s_2^2,\,2(s_1^2+s_1s_2+s_2^2)\Big\}\cup \Big\{\frac{3(s_1^2+s_1s_2s_2^2)}{d}\,|\, d\in D\Big\}\cup \Big\{\frac{6(s_1^2+s_1s_2s_2^2)}{d}\,|\, d\in D\Big\}\\
 \end{array}\]
 when  $s_1+s_2>2$  and to $\{6,8,12\}$ when $(s_1,s_2)\in\{(0,2),(2,0)\}$.
\end{thm}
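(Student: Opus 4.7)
The plan is to parallel the strategy used for the $\{4,4\}$ theorem, with the extra refinement coming from Proposition~\ref{36m2} which removes a family of otherwise admissible degrees. Throughout I assume $s_1+s_2>2$; the two small cases $(s_1,s_2)\in\{(0,2),(2,0)\}$ fall outside the hypotheses of Propositions~\ref{36degdih} and~\ref{36deggcd} and I would finish by invoking the {\sc corefreesub} package to return the prescribed set $\{6,8,12\}$.

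First I would split on whether the translation subgroup $T$ acts transitively on the $n$ points. If it does, then since $T\trianglelefteq G$ it acts regularly, so $n=|T|=s_1^2+s_1s_2+s_2^2$, accounting for the first value in the claimed set; this degree is also available (whether or not $T$ is transitive) as $|G:\langle b\rangle|$ by Proposition~\ref{36degdih}, using $|G|=6|T|$ and $|b|=6$. If $T$ is intransitive, Lemma~\ref{Gimp} gives $n=km$ with $m\mid |G|/|T|=6$ and $k=|T|/d$ for some $d\in D$, so $m\in\{2,3,6\}$.

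Next I would treat each value of $m$. For $m=2$, Proposition~\ref{36m2} forces $k=|T|$, hence $d=1$ and $n=2|T|$; this degree is realized by the core-free subgroup $\langle a\rangle$ of order $3$ (Proposition~\ref{36degdih}), whose index is $|G|/3=2|T|$. For $m=3$ and $m=6$, Proposition~\ref{36deggcd} supplies, for every $d\in D$, a core-free subgroup $\langle b^3,u^{s_1/d}v^{s_2/d}\rangle$ of index $3|T|/d$ and a core-free subgroup $\langle u^{s_1/d}v^{s_2/d}\rangle$ of index $6|T|/d$. Collecting all the indices just produced gives exactly the set displayed in the theorem, and Lemma~\ref{Gimp} together with Proposition~\ref{36m2} shows that no further value is attainable.

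The main obstacle I anticipate is the $m=2$ step. The divisibility bound $m\mid 6$ alone would allow $T$ to split into two orbits of any size $|T|/d$ with $d\in D$, spuriously producing all degrees $2|T|/d$. It is Proposition~\ref{36m2} that eliminates every such degree except $2|T|$: its proof exploits the fact that $|a|=3$ forces $a$ to preserve each of the two blocks while $b$ swaps them, and then conjugating the relation $u_1^d=v_1^j$ by $a,b,ab$ pins down $d=\gcd(s_1,s_2)$ and $k=|T|$. Everything else is bookkeeping, namely matching each listed degree with an explicit core-free subgroup from Propositions~\ref{36degdih} and~\ref{36deggcd} and checking that the $m=2$ refinement accounts for all omitted values.
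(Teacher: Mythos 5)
Your proposal is correct and follows essentially the same route as the paper's own proof: realize $s_1^2+s_1s_2+s_2^2$ and $2(s_1^2+s_1s_2+s_2^2)$ via the core-free subgroups $\langle b\rangle$ and $\langle a\rangle$ of Proposition~\ref{36degdih}, bound the intransitive-$T$ degrees by Lemma~\ref{Gimp}, eliminate the spurious $m=2$ degrees with Proposition~\ref{36m2}, realize the families $3|T|/d$ and $6|T|/d$ via Proposition~\ref{36deggcd}, and defer $(s_1,s_2)\in\{(0,2),(2,0)\}$ to the {\sc corefreesub} computation. Your write-up merely makes explicit the case analysis on $m\in\{2,3,6\}$ that the paper leaves implicit.
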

\begin{proof} 
Let $s_1+s_2>2$. By Proposition~\ref{36degdih}  $\langle a\rangle$ and $\langle b\rangle$ are core-free subgroup of $G$. As $|G: \langle a\rangle|=2(s_1^2+s_1s_2+s_2^2)$ and $|G: \langle b\rangle|=s_1^2+s_1s_2+s_2^2$ there is a  faithful transitive permutation representation of $G$ on the set of cosets of these two subgroups. If $T$ is transitive then the degree of $G$ is equals to the size of $T$, which is $s_1^2+s_1s_2+s_2^2$. 
Then we may assume that $T$ is intransitive. Hence the remaining degrees given in this theorems are obtained from Propositions~\ref{Gimp},~\ref{36deggcd} and ~\ref{36m2}. 

The cases  $(s_1,s_2)=(0,2)$ and $(s_1,s_2)=(2,0)$ can be  computed using the ``corefreesub'' package \cite{CFS}.
\end{proof}

\section{Toroidal Hypermaps $(3,3,3)$}

In this section let $G$ be the rotational group of  the hypermap $\{3,3,3\}_{(s_1,s_2)}.$

\begin{prop}\label{333deggcd}
 Let $d$ be a divisor of $gcd(s_1,s_2)$. If $s_1+s_2>2$, then   $\langle a\rangle $, $\langle b\rangle $,   $\langle ab \rangle$ and  $\langle u^{s_1/d}v^{s_2/d}\rangle$ are core-free subgroups of $G$. 
 \end{prop}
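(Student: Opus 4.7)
The plan is to adapt the arguments of Propositions~\ref{36degdih} and~\ref{36deggcd}, since the rotational group of the hypermap $(3,3,3)_{(s_1,s_2)}$ is structurally close to that of the map $\{3,6\}_{(s_1,s_2)}$. Crucially, inspecting Table~\ref{translations}, the conjugation action of $a$ on the translation lattice $T$ is \emph{identical} in the two cases, with $u^a=v^{-1}$ and $v^a=t^{-1}=v^{-1}u$. Only the conjugation by $b$ differs, and this only matters for the three cyclic subgroups $\langle a\rangle$, $\langle b\rangle$, $\langle ab\rangle$, where the treatment will follow the same geometric lines as in Propositions~\ref{44degdih} and~\ref{36degdih}.

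First I would handle the three cyclic subgroups, all of order $3$ in the $(3,3,3)$ case. For $H=\langle a\rangle$ I would consider $H\cap H^b$: a nontrivial element yields $a^i = b^{-1}a^j b$ with $i,j\in\{1,2\}$, equivalently $ba^i = a^j b$. As in the proof of Proposition~\ref{36degdih}, such an identification of flags cannot occur when $s_1+s_2>2$. The subgroups $\langle b\rangle$ and $\langle ab\rangle$ are handled analogously by considering $H^a$ and $H^b$ respectively.

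Next I would treat $H=\langle u^{s_1/d}v^{s_2/d}\rangle$. Applying the $(3,3,3)$ conjugation rules from Table~\ref{translations} together with the abelianness of $T$ yields
\[(u^{s_1/d}v^{s_2/d})^a \;=\; u^{s_2/d}\,v^{-(s_1+s_2)/d}.\]
Any element $\gamma\in H\cap H^a$ therefore satisfies $(u^{s_1/d}v^{s_2/d})^i = (u^{s_2/d}v^{-(s_1+s_2)/d})^j$ for some $i,j\in\{0,\ldots,d-1\}$, which rearranges to
\[u^{(s_1 i - s_2 j)/d}\,v^{(s_2 i + (s_1+s_2)j)/d} \;=\; id.\]
Geometrically this asserts that the point $\tfrac{i}{d}(s_1,s_2)+\tfrac{j}{d}(-s_2,s_1+s_2)$ lies in the identification lattice of the hypermap. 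Since this point sits inside the fundamental parallelogram with coefficients $i/d,\,j/d\in[0,1)$, we must have $i=j=0$, and hence $H\cap H^a=\{id\}$. This parallels the corresponding argument in Proposition~\ref{36deggcd}.

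I expect the main obstacle to be the cyclic case: making the geometric claim about non-identification of adjacent flags fully rigorous when $s_1+s_2>2$. As in Propositions~\ref{44degdih} and~\ref{36degdih}, I would phrase this appeal to the geometry of the tesselation informally. One structural remark worth including is that, unlike in the $\{4,4\}$ and $\{3,6\}$ cases, there is no auxiliary subgroup of the form $\langle a^k,u^{s_1/d}v^{s_2/d}\rangle$ to analyse here, because $a$, $b$ and $ab$ all already have order $3$ and no proper rotation can be adjoined to the translation subgroup; this accounts for the shorter list of core-free subgroups in the present statement relative to Propositions~\ref{44deggcd} and~\ref{36deggcd}.
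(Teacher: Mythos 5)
Your proposal is correct and takes essentially the same route as the paper: the paper's own proof of this proposition is literally the one-line remark that it is ``similar to the proofs of Propositions~\ref{36degdih} and~\ref{36deggcd}'', which is exactly the adaptation you carry out. Your explicit observation that the conjugation action of $a$ on $T$ coincides in the $\{3,6\}$ and $(3,3,3)$ cases (only the $b$-action differs), together with the computation $(u^{s_1/d}v^{s_2/d})^a = u^{s_2/d}v^{-(s_1+s_2)/d}$ and the resulting lattice argument forcing $i=j=0$, supplies precisely the detail the paper leaves implicit.
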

 \begin{proof}
 The proof is similar to the proof of Propositions~\ref{36degdih} and \ref{36deggcd}.
 \end{proof} 

\begin{thm}

Let $s_1$ and $s_2$ be nonnegative integers with $(s_1,s_2)\notin\{(1,0),(0,1), (1,1)\}$ and $D$  the set of divisors of $gcd(s_1,s_2)$.
 Suppose that $G$ is the rotational group of a toroidal map hypermap $(3,3,3)_{(s_1,s_2)}$. 
 The set of all possible degrees of a faithful transitive permutation representation of $G$ is equal to
 \[\begin{array}{c}
 \Big\{s_1^2+s_1s_2+s_2^2\Big\}\cup \Big\{\frac{3(s_1^2+s_1s_2s_2^2)}{d}\,|\, d\in D\Big\}.\\
 \end{array}\]
\end{thm}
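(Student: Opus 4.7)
The plan is to mirror the structure of the previous two sections, but note that for the hypermap $(3,3,3)_{(s_1,s_2)}$ all three rotations $a$, $b$, and $ab$ have order $3$, so that $|G|=3|T|=3(s_1^2+s_1s_2+s_2^2)$. First, I would exhibit explicit core-free subgroups realising each claimed degree. By Proposition~\ref{333deggcd}, each of $\langle a\rangle$, $\langle b\rangle$, $\langle ab\rangle$ is core-free of index $s_1^2+s_1s_2+s_2^2$, giving the first listed degree. Likewise, for every $d\in D$, the subgroup $\langle u^{s_1/d}v^{s_2/d}\rangle$ is core-free of order $d$, and hence yields a faithful transitive permutation representation of degree $\frac{3(s_1^2+s_1s_2+s_2^2)}{d}$, covering the second family.

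For the converse -- that no other degrees occur -- I would distinguish two cases according to whether $T$ acts transitively on the $n$ points. If $T$ is transitive, then since it is normal and fixed-point-free (hence semiregular), it acts regularly, forcing $n=|T|=s_1^2+s_1s_2+s_2^2$. If $T$ is intransitive, Lemma~\ref{Gimp} gives that the number $m$ of $T$-orbits divides $|G|/|T|=3$ and that each orbit has size $k=|T|/d$ for some divisor $d$ of $gcd(s_1,s_2)$. Since $m=1$ corresponds to the transitive case, we must have $m=3$, whence $n=mk=\frac{3(s_1^2+s_1s_2+s_2^2)}{d}$, which lies in the claimed list.

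This classification turns out to be considerably cleaner than the analogous theorems for $\{4,4\}$ and $\{3,6\}$, and the reason is essentially structural. Two features of the $(3,3,3)$ hypermap are responsible: the quotient index $|G|/|T|=3$ is prime, so there is no intermediate case $m=2$ requiring a separate argument like Proposition~\ref{36m2}; and since $a$, $b$, $ab$ all have prime order $3$, no proper nontrivial power of a rotation lies outside $T$, so there is no counterpart to the semidirect products $\langle a^2,u^{s_1/d}v^{s_2/d}\rangle$ or $\langle b^3,u^{s_1/d}v^{s_2/d}\rangle$ used in Propositions~\ref{44deggcd} and~\ref{36deggcd}. Consequently the main step is simply combining the core-free subgroups of Proposition~\ref{333deggcd} with the imprimitivity restriction from Lemma~\ref{Gimp}; no additional case analysis or degenerate small cases arise, and the two lists match exactly.
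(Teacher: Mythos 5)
Your proof is correct and follows exactly the route the paper intends: the paper actually states this theorem with no proof at all, leaving it to the combination of Proposition~\ref{333deggcd} and Lemma~\ref{Gimp} that you spell out, and your structural observation that $|G|/|T|=3$ being prime eliminates any analogue of the $m=2$ case (Proposition~\ref{36m2}) and of the semidirect-product subgroups of Propositions~\ref{44deggcd} and~\ref{36deggcd} is precisely why the list is cleaner here. One caveat: the theorem as stated admits $(s_1,s_2)\in\{(2,0),(0,2)\}$, for which $s_1+s_2>2$ fails, so Proposition~\ref{333deggcd} cannot be invoked for the existence half of your argument in that boundary case; it needs a separate (e.g.\ computational) check, as the paper performs for the other two families --- here $G\cong\Alt_4$ and its faithful transitive degrees $\{4,6,12\}$ do agree with the stated formula, so the gap is easily closed.
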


\section{Schreier coset graphs}

Let $G=\langle g_i \,|\,  i\in I\rangle$ be a finite  group.
Suppose that $G$ has a faithful transitive permutation representation of degree $n$ (which corresponds to a core-free subgroup of $G$).
A \emph{Schreier coset graph} of $G$ has $n$ vertices and has a directed edge $(x,y)$ with label $g_i$ whenever $xg_i = y$. When $g_i$ is an involution, the two directed edges $(x,y)$ and $(y,x)$ are replaced by a single undirected edge $\{x,y\}$ with label $g_i$. 
In this section, we  give computational tools to represent  Schreier coset graphs of any group, but as example we consider automorphism groups of toroidal maps and hypermaps.

In \cite{FP20Tor,FP21Cor} the authors gave some examples of Schreier coset  graphs of  toroidal regular maps. 
Due to the complexity of drawing Schreier coset  graph of  toroidal chiral maps and hypermaps by hand, we leveraged the funcionalities offered by the {\sc corefreesub} {\sc GAP} package~\cite{GAP,CFS}.
In what follows, we present a code that can be executed using the {\sc GAP} system, provided that the {\sc corefreesub} package has been installed.
As an example we obtain graphs of minimal degree for the map $\{4,4\}_{(2,1)}$ and  the hypermap $(3,3,3)_{(3,2)}$. The Schreier coset graphs obtained are represented in Figures~\ref{FTPRimage} and \ref{333figTex}.\\

\begin{small}
\begin{verbatim}
 gap> LoadPackage("corefreesub");;
 gap> F := FreeGroup("a","b");;
 gap> s1 := 2 ;; s2 := 1;; 
 gap> G44 := F/[F.1^4, F.2^4, (F.1*F.2)^2, (F.1*F.2^-1)^s1*(F.1^-1*F.2)^s2];;
 gap> FTPRs44 := FaithfulTransitivePermutationRepresentations(G44);
 [ [ a, b ] -> [ (1,2,6,3)(4,10,19,11)(5,13,16,7)(8,18,12,14)(9,15,20,17),
 (1,4,12,5)(2,7,17,8)(3,9,16,10)(6,14,11,15)(13,18,20,19) ], 
 [ a, b ] -> [ (1,2,5,3)(4,8,10,6)(7,9), (1,4)(2,6,9,5)(3,7,10,8) ],
 [ a, b ] -> [ (1,2,4,3), (2,3,5,4) ] ]
 gap> DrawFTPRGraph(FTPRs44[3],rec(layout := "sfdp", gen_name := ["a","b"])); 
\end{verbatim}

\end{small}

\begin{figure}[h!]
 \includegraphics[width =0.7\linewidth]{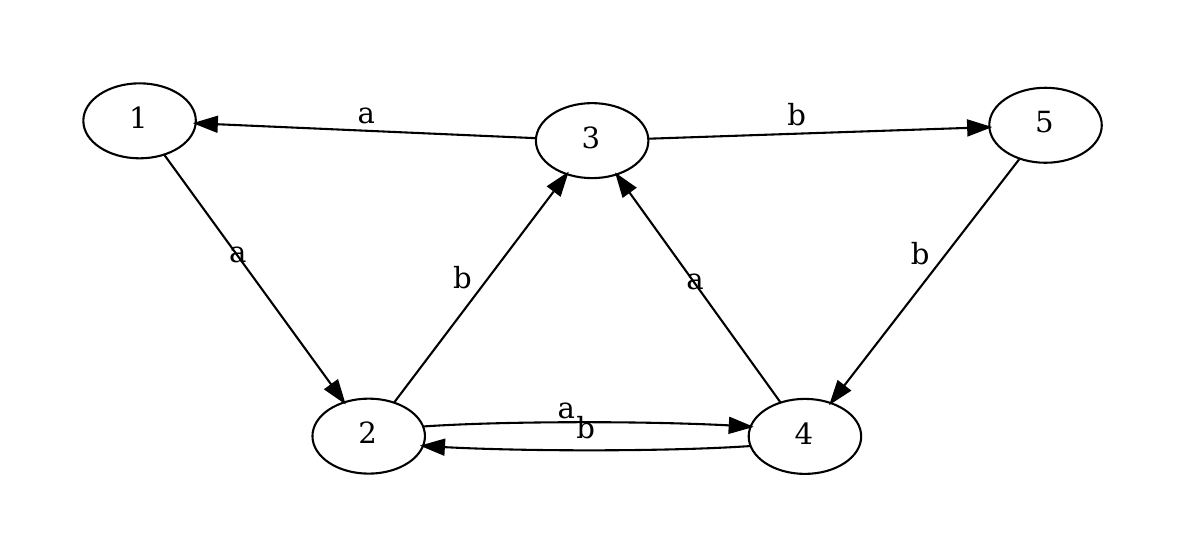}
 \caption{ A Schreier coset graph of $\{4,4\}_{(2,1)}$}
 \label{FTPRimage}
\end{figure}

\begin{small}
\begin{verbatim}
 gap> LoadPackage("corefreesub");; F := FreeGroup("a","b");; s1 := 3 ;; s2 := 2;; 
 gap> G333 := F/[F.1^3, F.2^3, (F.1*F.2)^3, (F.1*F.2^-1)^s1*(F.1^-1*F.2)^s2];;
 gap> FTPRs333 := FaithfulTransitivePermutationRepresentations(G333);
 [ [ a, b ] -> [ (1,2,3)(4,10,11)(5,12,13)(6,14,15)(7,16,17)(8,18,19)(9,20,21)
 (22,37,38)(23,39,40)(24,41,25)(26,42,43)(27,44,45)(28,46,47)(29,48,30)
 (31,49,50)(32,51,52)(33,53,54)(34,55,35)(36,56,57), (1,4,5)(2,6,7)(3,8,9)
 (10,21,22)(11,23,24)(12,25,26)(13,27,14)(15,28,29)(16,30,31)(17,32,18)
 (19,33,34)(20,35,36)(37,57,48)(38,47,39)(40,53,52)(41,51,50)(42,49,56)
 (43,55,44)(45,54,46) ],
 [ a, b ] -> [ (1,2,3)(4,7,8)(5,9,10)(6,11,12)(13,19,17)(14,16,15), (2,4,5)
 (3,6,7)(8,13,14)(9,15,16)(10,17,11)(12,18,19) ] ]
 gap> DrawTeXFTPRGraph(FTPRs333[2],rec(layout := "neato", gen_name := ["a","b"]));
\end{verbatim}
\end{small}

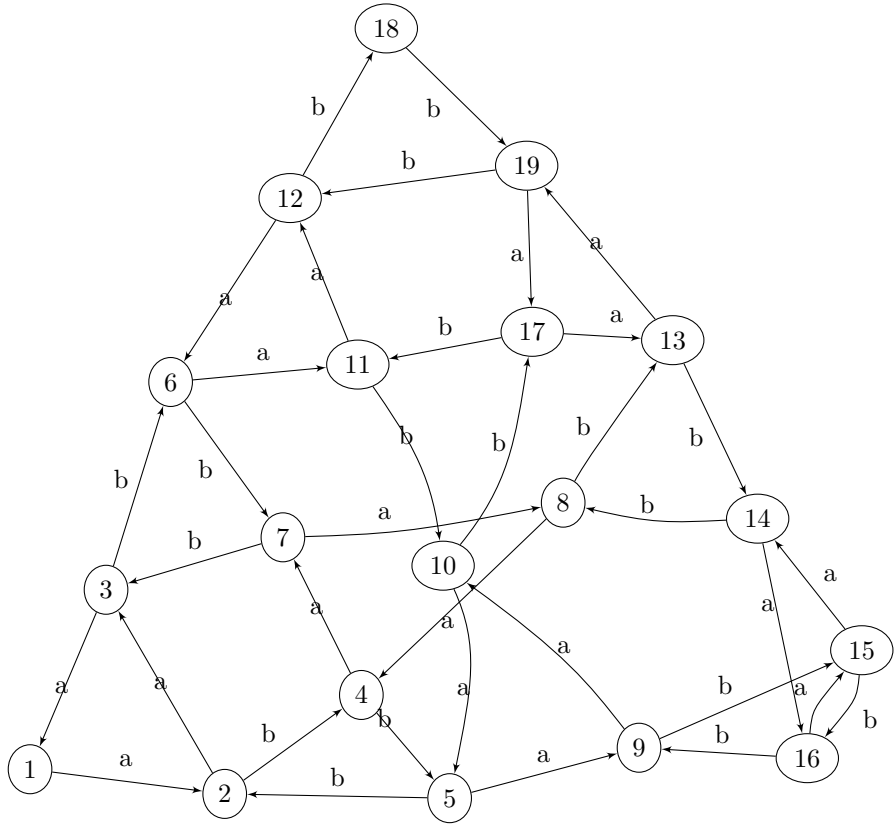
\begin{figure}[h!]
\begin{tikzpicture}[>=latex',line join=bevel,scale=1.00]
\node (1) at (27.0bp,28.912bp) [draw,ellipse] {1};
  \node (2) at (100.36bp,19.613bp) [draw,ellipse] {2};
  \node (3) at (55.637bp,96.602bp) [draw,ellipse] {3};
  \node (4) at (151.85bp,56.924bp) [draw,ellipse] {4};
  \node (6) at (79.974bp,174.96bp) [draw,ellipse] {6};
  \node (7) at (122.29bp,116.38bp) [draw,ellipse] {7};
  \node (5) at (185.12bp,18.0bp) [draw,ellipse] {5};
  \node (8) at (227.91bp,129.45bp) [draw,ellipse] {8};
  \node (9) at (256.49bp,37.026bp) [draw,ellipse] {9};
  \node (10) at (182.66bp,105.68bp) [draw,ellipse] {10};
  \node (15) at (340.44bp,73.797bp) [draw,ellipse] {15};
  \node (11) at (150.52bp,181.53bp) [draw,ellipse] {11};
  \node (12) at (125.02bp,244.33bp) [draw,ellipse] {12};
  \node (13) at (269.2bp,190.73bp) [draw,ellipse] {13};
  \node (17) at (216.24bp,193.88bp) [draw,ellipse] {17};
  \node (18) at (161.26bp,308.3bp) [draw,ellipse] {18};
  \node (19) at (214.15bp,256.53bp) [draw,ellipse] {19};
  \node (14) at (301.2bp,123.42bp) [draw,ellipse] {14};
  \node (16) at (319.91bp,33.087bp) [draw,ellipse] {16};
  \draw [->] (1) ..controls (56.989bp,25.111bp) and (60.179bp,24.706bp)  .. (2);
  \definecolor{strokecol}{rgb}{0.0,0.0,0.0};
  \pgfsetstrokecolor{strokecol}
  \draw (63.135bp,32.402bp) node {a};
  \draw [->] (2) ..controls (84.643bp,46.667bp) and (77.222bp,59.442bp)  .. (3);
  \draw (76.077bp,61.166bp) node {a};
  \draw [->] (2) ..controls (120.73bp,34.375bp) and (122.87bp,35.929bp)  .. (4);
  \draw (116.84bp,42.682bp) node {b};
  \draw [->] (3) ..controls (45.246bp,72.041bp) and (41.687bp,63.63bp)  .. (1);
  \draw (38.808bp,59.959bp) node {a};
  \draw [->] (3) ..controls (64.209bp,124.2bp) and (68.061bp,136.6bp)  .. (6);
  \draw (61.322bp,138.5bp) node {b};
  \draw [->] (4) ..controls (140.87bp,79.01bp) and (138.16bp,84.45bp)  .. (7);
  \draw (134.91bp,89.44bp) node {a};
  \draw [->] (4) ..controls (165.51bp,40.94bp) and (165.64bp,40.787bp)  .. (5);
  \draw (160.58bp,48.363bp) node {b};
  \draw [->] (7) ..controls (95.158bp,108.33bp) and (92.789bp,107.63bp)  .. (3);
  \draw (88.939bp,115.47bp) node {b};
  \draw [->] (7) ..controls (159.13bp,117.57bp) and (170.22bp,117.92bp)  .. (8);
  \draw (160.52bp,125.26bp) node {a};
  \draw [->] (5) ..controls (151.48bp,18.64bp) and (144.61bp,18.771bp)  .. (2);
  \draw (142.87bp,26.209bp) node {b};
  \draw [->] (5) ..controls (214.06bp,25.715bp) and (217.79bp,26.708bp)  .. (9);
  \draw (220.49bp,33.73bp) node {a};
  \draw [->] (9) ..controls (237.1bp,61.759bp) and (228.86bp,72.27bp)  .. (10);
  \draw (228.12bp,74.973bp) node {a};
  \draw [->] (9) ..controls (288.39bp,50.997bp) and (298.81bp,55.563bp)  .. (15);
  \draw (288.96bp,60.936bp) node {b};
  \draw [->] (6) ..controls (95.704bp,153.18bp) and (100.19bp,146.97bp)  .. (7);
  \draw (93.125bp,142.33bp) node {b};
  \draw [->] (6) ..controls (109.17bp,177.68bp) and (111.34bp,177.88bp)  .. (11);
  \draw (114.78bp,185.28bp) node {a};
  \draw [->] (11) ..controls (168.8bp,154.93bp) and (177.87bp,141.74bp)  .. (10);
  \draw (168.76bp,155.22bp) node {b};
  \draw [->] (11) ..controls (141.1bp,204.74bp) and (138.46bp,211.23bp)  .. (12);
  \draw (135.17bp,215.75bp) node {a};
  \draw [->] (8) ..controls (194.23bp,97.038bp) and (184.29bp,87.629bp)  .. (4);
  \draw (184.31bp,84.408bp) node {a};
  \draw [->] (8) ..controls (236.22bp,144.89bp) and (244.25bp,156.06bp)  .. (13);
  \draw (235.6bp,158.48bp) node {b};
  \draw [->] (10) ..controls (196.61bp,75.438bp) and (193.39bp,59.358bp)  .. (5);
  \draw (190.35bp,59.107bp) node {a};
  \draw [->] (10) ..controls (207.35bp,136.09bp) and (209.84bp,152.27bp)  .. (17);
  \draw (203.72bp,152.48bp) node {b};
  \draw [->] (12) ..controls (108.75bp,219.27bp) and (102.17bp,209.14bp)  .. (6);
  \draw (100.67bp,206.26bp) node {a};
  \draw [->] (12) ..controls (138.31bp,267.78bp) and (142.45bp,275.08bp)  .. (18);
  \draw (135.55bp,279.24bp) node {b};
  \draw [->] (13) ..controls (249.24bp,214.58bp) and (241.11bp,224.31bp)  .. (19);
  \draw (240.33bp,227.36bp) node {a};
  \draw [->] (13) ..controls (280.81bp,166.31bp) and (284.79bp,157.94bp)  .. (14);
  \draw (277.97bp,154.25bp) node {b};
  \draw [->] (19) ..controls (179.39bp,251.77bp) and (170.45bp,250.55bp)  .. (12);
  \draw (169.65bp,258.62bp) node {b};
  \draw [->] (19) ..controls (214.92bp,233.39bp) and (215.11bp,227.81bp)  .. (17);
  \draw (210.52bp,222.88bp) node {a};
  \draw [->] (14) ..controls (267.97bp,121.84bp) and (261.57bp,121.53bp)  .. (8);
  \draw (259.61bp,129.18bp) node {b};
  \draw [->] (14) ..controls (307.52bp,92.913bp) and (311.09bp,75.665bp)  .. (16);
  \draw (305.0bp,90.872bp) node {a};
  \draw [->] (16) ..controls (292.92bp,34.764bp) and (292.82bp,34.77bp)  .. (9);
  \draw (287.87bp,42.267bp) node {b};
  \draw [->] (16) ..controls (321.78bp,51.354bp) and (321.84bp,51.512bp)  .. (15);
  \draw (317.31bp,58.933bp) node {a};
  \draw [->] (15) ..controls (325.36bp,92.87bp) and (322.74bp,96.181bp)  .. (14);
  \draw (328.47bp,102.12bp) node {a};
  \draw [->] (15) ..controls (338.58bp,55.531bp) and (338.51bp,55.373bp)  .. (16);
  \draw (343.55bp,47.952bp) node {b};
  \draw [->] (17) ..controls (188.95bp,188.75bp) and (188.01bp,188.57bp)  .. (11);
  \draw (183.47bp,196.16bp) node {b};
  \draw [->] (17) ..controls (243.39bp,192.26bp) and (243.44bp,192.26bp)  .. (13);
  \draw (247.92bp,199.76bp) node {a};
  \draw [->] (18) ..controls (181.23bp,288.75bp) and (186.47bp,283.62bp)  .. (19);
  \draw (179.04bp,278.5bp) node {b};
\end{tikzpicture}
\caption{A Schreier coset graph of $(3,3,3)_{(3,2)}$}
\label{333figTex}
\end{figure}

\section{Acknowledgments}

The author Maria Elisa Fernandes was supported by  the Center for Research and Development 
in Mathematics and Applications (CIDMA) through the Portuguese 
Foundation for Science and Technology 
(FCT - Fundação para a Ciência e a Tecnologia), 
references UIDB/04106/2020 and UIDP/04106/2020. The author Claudio Alexandre Piedade was partially supported by CMUP, member of LASI, which is financed by national funds through FCT – Fundação para a Ciência e a Tecnologia, I.P., under the projects with reference UIDB/00144/2020 and UIDP/00144/2020.

\bibliographystyle{amsplain}
\bibliography{ChiralToroidalMaps}
\end{document}